%

\documentclass{article}

\usepackage{amsmath}
\usepackage{amsthm}
\usepackage{amsfonts}
\usepackage{amssymb}
\usepackage[all]{xy}
\usepackage{url}

\DeclareMathSymbol{\rightrightarrows}  {\mathrel}{AMSa}{"13}

\def\sd{\operatorname{sd}}

\def\Mor{\operatorname{Mor}}

\def\Pre{\operatorname{Pre}}

\def\Ex{\operatorname{Ex}}

\def\Pre{\mathbf{Pre}}

\catcode`\@=11
\def\varholim@#1#2{\mathop{\vtop{\ialign{##\crcr
 \hfil$#1\m@th\operator@font holim$\hfil\crcr
 \noalign{\nointerlineskip\kern\ex@}#2#1\crcr
 \noalign{\nointerlineskip\kern-\ex@}\crcr}}}}
\def\hocolim{\mathpalette\varholim@\rightarrowfill@} 
\def\hoinvlim{\mathpalette\varholim@\leftarrowfill@}
\catcode`\@=\active

\newtheorem{theorem}{Theorem}
\newtheorem{lemma}[theorem]{Lemma}
\newtheorem{corollary}[theorem]{Corollary}

\newtheorem{proposition}[theorem]{Proposition}

\theoremstyle{definition}

\newtheorem{remark}[theorem]{Remark}


\begin{document}

\title{Pro-equivalences of diagrams}
\author{J.F. Jardine\\
  Department of Mathematics\\
  University of Western Ontario}

\date{November 7, 2016}

\maketitle

\section*{Introduction}

The paper \cite{pro} constructs model structures for pro-objects in
simplicial presheaves. These constructions are based on the standard methods of
local homotopy theory \cite{LocHom}, and generalize known results for
pro-objects in ordinary categories of spaces \cite{EH},
\cite{Isaksen1}. The homotopy theory of pro-objects, as displayed in
all of these papers, is based on (and generalizes) standard features
of \'etale homotopy theory \cite{AM}, \cite{F}, and is an
artifact of the \'etale topology, and ultimately of Galois theory.

One can ask if there are intrinsic invariants which arise from
different Groth\-endieck topologies, which would generalize and extend
existing pro-homotopy theoretic techniques. The theory of cocycle
categories \cite{coc-cat}, \cite{LocHom} suggests this: it involves diagrams of weak
equivalences which extend classical diagrams of hypercovers, with
techniques that are present in all topologies. These diagrams have
very little structure, and must be considered in the context of
arbitrary small diagrams of simplicial presheaves, with a suitably
defined notions of ``pro-map'' and
``pro-equivalence''.

Suppose that $X: I \to s\Pre$ and $Y: J \to s\Pre$ are small diagrams
of simplicial presheaves.

If $I$ and $J$ are left filtered and $X$
and $Y$ are pro-objects, the most efficient way to define a
pro-map $\phi: X \to Y$ (following Grothendieck) is to say that $\phi$
is a natural transformation
\begin{equation*}
  \varinjlim_{j \in J}\ \hom(Y_{j},\ ) \to \varinjlim_{i \in I}\ \hom(X_{i},\ )
\end{equation*}
of the corresponding pro-representable functors. Then, as in
\cite{pro}, the map $\phi$ is a pro-equivalence if the induced map
\begin{equation*}
  \varinjlim_{j \in J}\ \mathbf{hom}(Y_{j},Z) \to
  \varinjlim_{i \in I}\ \mathbf{hom}(X_{i},Z)
\end{equation*}
of colimits of function spaces is a weak equivalence for all fibrant
objects $Z$. The displayed colimits are filtered, and filtered colimits are
already homotopy colimits, so this makes perfect sense from a homotopy
theoretic point of view.

In more general situations, in which the categories $I$ and $J$ are no
longer filtered, the colimits above must be replaced with homotopy
colimits. Then, for example, the homotopy colimit for the functor $j
\mapsto \hom(Y_{j},Z)$ is the nerve of the slice category $Y/Z$ whose
objects are the morphisms $Y_{j} \to Z$, and whose morphisms are
commutative diagrams
\begin{equation*}
  \xymatrix@R=6pt{
    Y_{j} \ar[dr] \ar[dd]_{\alpha_{\ast}} \\
& Z\\
    Y_{j'} \ar[ur]
  }
  \end{equation*}
in which $\alpha: j \to j'$ is a morphism of $J$. In this way, the
diagram $Y: J \to s\Pre$ homotopy pro-represents a functor $B(Y/?):
s\Pre \to s\mathbf{Set}$, with
\begin{equation*}
  Z \mapsto B(Y/Z).
\end{equation*}
A pro-map $X \to Y$ for arbitrary index categories is then naturally
defined as a natural transformation
\begin{equation*}
  B(Y/?) \to B(X/?)
\end{equation*}
of homotopy pro-representable functors. One can show that a pro-map,
so defined, consists of a functor $\alpha: J \to I$ and a natural
transformation $\theta: X\cdot \alpha \to Y$, and is therefore a
morphism of the Grothendieck construction associated to the list of
diagram categories $s\Pre^{I}$ and the functors between them which are
defined by restriction along functors $J \to I$.

Each such pro-map $(\alpha,\theta): X \to Y$ induces a commutative diagram
\begin{equation*}
  \xymatrix@C=8pt{
    \hocolim_{j \in J}\ \mathbf{hom}(Y_{j},Z) \ar[d] \ar[r]^-{\theta^{\ast}} &
    \hocolim_{j \in J}\ \mathbf{hom}(X_{\alpha(j)},Z) \ar[d] \ar[r]^-{\alpha_{\ast}} &
    \hocolim_{i \in I}\ \mathbf{hom}(X_{j},Z) \ar[d] \\
    BJ \ar[r]_{1} & BJ \ar[r]_{\alpha} & BI
  }
  \end{equation*}
and I say that the map $(\alpha,\theta)$ is a {\it pro-equivalence} if
the simplicial set map $\alpha: BJ \to BI$ is a weak equivalence, and
the top composite
\begin{equation*}
  \hocolim_{j \in J}\ \mathbf{hom}(Y_{j},Z) \to
  \hocolim_{i \in I}\ \mathbf{hom}(X_{j},Z)
\end{equation*}
is a weak equivalence of simplicial sets for each fibrant object $Z$.

The requirement that the map $\alpha: BJ \to BI$ be a weak equivalence
is not an issue if $I$ and $J$ are filtered categories, because the
spaces $BJ$ and $BI$ are contractible in that case. The definition of
pro-map between arbitrary small diagrams, while motivated by the
classical Grothendieck description as a transformation of
pro-representable functors, is  more rigid, because it involves a
comparison of functors that are represented by homotopy colimits.

If  $I$ is a fixed small category, then a natural
transformation $f: X \to Y$ of $I$-diagrams of simplicial presheaves
is a pro-map in the sense described above, and it is a pro-equivalence
if and only if it induces weak equivalences
\begin{equation*}
  f^{\ast}: \hocolim_{i \in I}\ \mathbf{hom}(Y_{i},Z) \to
  \hocolim_{i \in I}\ \mathbf{hom}(X_{i},Z)
\end{equation*}
for all injective fibrant objects $Z$. Observe that if $I$ is left
filtered, then the homotopy colimits can be replaced by filtered
colimits, and then the transformation $f$ would be a pro-equivalence
of pro-objects.
\medskip

One wants to show that the general description of
pro-equivalence that is displayed above is part of a homotopy
theoretic structure for small diagrams, but this has so far not been
realized.

The purpose of the present paper is more modest, to show that the
category of $I$-diagrams of simplicial presheaves, with ordinary
cofibrations and pro-equivalences, has a homotopy theoretic structure
in that it satisfies the axioms for a left proper closed simplicial
model category that is cofibrantly generated. This is the main result
of this paper, and appears as Theorem \ref{th xxx} below.

This has not previously been done for classical pro-objects, and that
theory is worked out as a test case in Section 1. The corresponding
model structure is given by Proposition \ref{prop 5}. 

The main steps in the arguments for Proposition \ref{prop 5} and
Theorem \ref{th xxx} are bounded monomorphism statements (in the style
of \cite{LocHom}), which appear as Lemma \ref{lem 1} and Lemma
\ref{lem 11}, respectively. Lemma \ref{lem 1}
is a special case of Lemma \ref{lem 11}, but it appears in the first section
with a much easier argument that makes heavy use of standard filtered
colimit techniques. 

The proof of Lemma \ref{lem 11} is more interesting, in that it
involves restrictions of $I$-diagrams to finite diagrams defined on
order complexes of finite simplicial complexes, in conjunction with
subdivision arguments that appeal to the stalklike structure of Kan's
$\Ex^{\infty}$ construction.

\tableofcontents

\section{The left filtered case}

Suppose that $I$ is a small category, and let $\mathcal{C}$ be a small
Grothendieck site.

In all of this paper, we assume that $\alpha$ is a regular cardinal
such that $\alpha > \vert \Mor(\mathcal{C}) \vert$ and $\alpha > \vert
I \vert$, and define the injective fibrant model functor $X \mapsto
L(X)$ for simplicial presheaves $X$ by formally inverting the
$\alpha$-bounded trivial cofibrations for the injective model
structure on the simplicial presheaf category $s\Pre$.

In this case, we know that there is a regular cardinal $\lambda >
\alpha$, such that if $X$ is a simplicial presheaf such that $\vert X
\vert < \lambda$ then the fibrant model $L(X)$ satisfies $\vert L(X)
\vert < \lambda$. Further, the fibrant model construction satisfies
\begin{equation*}
  L(X) = \varinjlim_{Y \in B_{\lambda}(X)}\ L(Y),
\end{equation*}
where $B_{\lambda}(X)$ denotes the family of $\lambda$-bounded
subobjects of $X$. The functor $X \mapsto L(X)$ also preserves
monomorphisms and intersections. See Lemma 7.16 of \cite{LocHom}.

It follows, for example, that,
given a general lifting problem of simplicial presheaves
\begin{equation}\label{eq 1}
\xymatrix{
A \ar[r]^{f} \ar[d]_{i} & Z \\
B \ar@{.>}[ur]
}
\end{equation}
 with $i$ a cofibration, $A$ $\lambda$-bounded and $Z$ injective fibrant, we can
 replace $Z$ by the $\lambda$-bounded object $L(A)$. In effect, we find a 
 factorization
\begin{equation*}
\xymatrix{
A \ar[r]^{j} \ar[dr]_{f} & L(A) \ar[d] \\
& Z
}
\end{equation*}
of the map $f$, where the map $j$ is the fibrant model. It follows that, if we
can solve the lifting problem
\begin{equation*}
\xymatrix{
A \ar[r]^{j} \ar[d]_{i} & L(A) \\
B \ar@{.>}[ur]
}
\end{equation*}
then we can solve the lifting problem (\ref{eq 1}) for all injective
fibrant objects $Z$.
\medskip

Suppose, for the rest of this section, that the index category $I$ is
left filtered.

Suppose given a
diagram of cofibrations 
\begin{equation*}
\xymatrix{
& X \ar[d]^{i} \\
A \ar[r] & Y
}
\end{equation*}
where $A$ is $\lambda$-bounded and $i$ is a pro-equivalence. 

The map 
\begin{equation*}
\varinjlim_{i}\ \mathbf{hom}(Y_{i},Z) \to \varinjlim_{i}\ \mathbf{hom}(X_{i},Z)
\end{equation*}
is a trivial fibration of simplicial sets for all injective fibrant
objects $Z$. This is equivalent to the assertion that any lifting
problem
\begin{equation}\label{eq 2}
\xymatrix{
(\partial\Delta^{n} \times Y_{i}) \cup (\Delta^{n} \times X_{i}) \ar[r]^-{f} \ar[d]
& Z \\
\Delta^{n} \times Y_{i} \ar@{.>}[ur] 
}
\end{equation}
can be solved after refining along $i$. Solution after refinement
means that there is a morphism $\alpha: j \to i$ of $I$ with a
commutative diagram
\begin{equation*}
\xymatrix{
(\partial\Delta^{n} \times Y_{j}) \cup (\Delta^{n} \times X_{j}) \ar[r]^{\alpha_{\ast}} \ar[d] & (\partial\Delta^{n} \times Y_{i}) \cup (\Delta^{n} \times X_{i})  \ar[r]^-{f} 
& Z \\
\Delta^{n} \times Y_{j} \ar@<-1ex>[urr]_{\theta}
}
\end{equation*}
We replace the map to $Z$
in the picture by the fibrant model
\begin{equation*}
(\partial\Delta^{n} \times Y_{i}) \cup (\Delta^{n} \times X_{i}) \xrightarrow{j} Z(Y)_{i}.
\end{equation*}

Write $Z(B)_{i}$ for the fibrant model of the object
\begin{equation*}
  (\partial\Delta^{n} \times B_{i}) \cup (\Delta^{n} \times (B_{i} \cap
  X_{i})),
\end{equation*}
where $B$ varies through the $\lambda$-bounded subobjects of $Y$.
There is a relation
\begin{equation*}
\varinjlim_{B}\ Z(B)_{i} = Z(Y)_{i}
\end{equation*}
where the colimit is indexed over the $\lambda$-bounded subobjects $B$
of $Y$. In effect, every $\lambda$-bounded subobject $C$ of
\begin{equation*}
  (\partial\Delta^{n} \times Y_{i}) \cup (\Delta^{n} \times X_{i})
\end{equation*}
is contained in some 
\begin{equation*}
  (\partial\Delta^{n} \times B_{i}) \cup (\Delta^{n} \times (B_{i} \cap
  X_{i})),
\end{equation*}
with $B \subset Y$ $\lambda$-bounded, while $Z(Y_{i})$ is a filtered colimit of the objects $Z(C)$.

It follows that the image of the composite map
\begin{equation*}
  \Delta^{n} \times A_{j} \to \Delta^{n} \times Y_{j} \xrightarrow{\theta} Z(Y)_{i}
  \end{equation*}
lies in $Z(B)_{i}$ for some $\lambda$-bounded $B$ such that $A
\subset B$.

This is the start of an inductive process. There is a
$\lambda$-bounded subobject $B$ with $A \subset B$ such that every lifting
problem
\begin{equation*}
\xymatrix{
(\partial\Delta^{n} \times A_{i}) \cup (\Delta^{n} \times (A_{i} \cap X_{i}))
\ar[r]^-{j} \ar[d] & Z(A)_{i} \\
\Delta^{n} \times A_{i} \ar@{.>}[ur]
}
\end{equation*}
(i.e. for all $i$) is solved over $B$ after refinement. It follows
that there is a $\lambda$-bounded $C$ with $A \subset C \subset Y$
such that every lifting problem
\begin{equation*}
\xymatrix{
(\partial\Delta^{n} \times C_{i}) \cup (\Delta^{n} \times (C_{i} \cap X_{i}))
\ar[r]^-{j} \ar[d] & Z(C)_{i} \\
\Delta^{n} \times C_{i} \ar@{.>}[ur]
}
\end{equation*}
is solved after refinement, and this for all $i$.

We have proved the following bounded
monomorphism statement:

\begin{lemma}\label{lem 1} 
Suppose that $I$ is a left filtered category. Suppose given a
diagram of cofibrations 
\begin{equation*}
\xymatrix{
& X \ar[d]^{i} \\
A \ar[r] & Y
}
\end{equation*}
where $A$ is $\lambda$-bounded and $i$ is a pro-equivalence.  Then
there is an $\lambda$-bounded subobject $C \subset Y$ with $A \subset
C$ such that the map $C \cap X \to C$ is a pro-equivalence.
\end{lemma}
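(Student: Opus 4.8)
The plan is to translate the pro-equivalence hypothesis into a lifting property, to reduce the targets of the resulting lifting problems to $\lambda$-bounded objects, and then to enlarge $A$ to a $\lambda$-bounded $C$ that is closed under the solutions of these problems. Since $i \colon X \to Y$ is a cofibration and a pro-equivalence and $I$ is left filtered, the map $\varinjlim_{i}\ \mathbf{hom}(Y_{i},Z) \to \varinjlim_{i}\ \mathbf{hom}(X_{i},Z)$ is a trivial fibration of simplicial sets for every injective fibrant $Z$, which is equivalent to saying that every lifting problem of the form (\ref{eq 2}) is solved after refinement along some morphism $\alpha \colon j \to i$ of $I$; the filtered hypothesis is used to factor the finite simplicial sets $\partial\Delta^{n}$ and $\Delta^{n}$ through a single stage of the filtered colimit and to produce $\alpha$. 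What must be shown is the corresponding statement for the restricted cofibration $C \cap X \to C$.

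First I would reduce the targets to $\lambda$-bounded fibrant models. By the reduction at the start of the section, a lifting problem against an arbitrary injective fibrant $Z$ is solved as soon as the single canonical problem, whose target is the fibrant model $Z(Y)_{i}$ of the source and whose attaching map is the fibrant model map, is solved. Thus for each $n$ and each $i \in I$ the pro-equivalence hypothesis furnishes, once and for all, a refinement $\alpha \colon j \to i$ and a solution $\theta \colon \Delta^{n} \times Y_{j} \to Z(Y)_{i}$. The second structural input is that $L$ preserves filtered colimits over $\lambda$-bounded subobjects, so $Z(Y)_{i} = \varinjlim_{B}\ Z(B)_{i}$; consequently the restriction of the fixed map $\theta$ to any $\lambda$-bounded $\Delta^{n} \times B_{j}$ has image contained in $Z(B')_{i}$ for some $\lambda$-bounded $B'$ with $B \subset B'$.

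The main construction is then an inductive closure. Beginning with $A = C_{0}$, I would choose at each stage a $\lambda$-bounded subobject $C_{k+1}$ with $C_{k} \subset C_{k+1} \subset Y$ so large that, for every $n$ and every $i$, the image of the fixed solution $\theta$ on $\Delta^{n} \times (C_{k})_{j}$ lands in $Z(C_{k+1})_{i}$. This is possible because there are at most $\aleph_{0} \cdot \vert I \vert < \lambda$ pairs $(n,i)$ and each contributes only $\lambda$-bounded data. Setting $C = \bigcup_{k} C_{k}$, the regularity of $\lambda$ (and $\lambda > \aleph_{0}$) makes $C$ again $\lambda$-bounded, and since $L$ commutes with the colimit along the chain, $Z(C)_{i} = \varinjlim_{k}\ Z(C_{k})_{i}$.

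It remains to check that $C$ works, and this stabilization is the main obstacle. For a fixed $(n,i)$, the domain $\Delta^{n} \times C_{j}$ is the filtered colimit of the $\Delta^{n} \times (C_{k})_{j}$, so the restriction of $\theta$ to $\Delta^{n} \times C_{j}$ has image $\varinjlim_{k} \theta(\Delta^{n} \times (C_{k})_{j}) \subset \varinjlim_{k} Z(C_{k+1})_{i} = Z(C)_{i}$; compatibility of the fibrant models with the colimit then shows that this restriction is a solution, after refinement, of the canonical lifting problem
\begin{equation*}
\xymatrix{
(\partial\Delta^{n} \times C_{i}) \cup (\Delta^{n} \times (C_{i} \cap X_{i}))
\ar[r]^-{j} \ar[d] & Z(C)_{i} \\
\Delta^{n} \times C_{i} \ar@{.>}[ur]
}
\end{equation*}
for each $i$. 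The delicate point is precisely that closing $A$ under the images of the fixed solutions $\theta$ is enough to force every $C$-problem back into the $\lambda$-bounded target $Z(C)_{i}$; it is the filtered-colimit compatibility of the fibrant model functor, together with the finiteness of $\partial\Delta^{n}$ and $\Delta^{n}$, that makes this work. Granting it, $C \cap X \to C$ is a pro-equivalence, which is the assertion of the lemma.
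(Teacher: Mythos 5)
Your argument is essentially the paper's own proof of Lemma \ref{lem 1}, spelled out in more detail: the same reduction of lifting problems against arbitrary injective fibrant $Z$ to the single canonical problem with $\lambda$-bounded target $Z(Y)_{i}$, the same decomposition $Z(Y)_{i} = \varinjlim_{B} Z(B)_{i}$ over $\lambda$-bounded subobjects, and the same inductive closure of $A$ under images of solutions. Your explicit device of fixing, once and for all, one $Y$-level solution $\theta_{n,i}: \Delta^{n} \times Y_{j} \to Z(Y)_{i}$ per pair $(n,i)$ and only tracking its restrictions is exactly what the paper does implicitly when it restricts $\theta$ to $\Delta^{n} \times A_{j}$; it is also what dissolves the compatibility problem at the limit stage, since the stagewise solutions are all restrictions of a single map rather than independently chosen lifts, and you are right to identify this stabilization as the crux.

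One step is overstated: the claim that $Z(C)_{i} = \varinjlim_{k} Z(C_{k})_{i}$ for your countable chain, justified by ``$L$ commutes with the colimit along the chain.'' The cited property of the fibrant model functor is $L(X) = \varinjlim_{Y \in B_{\lambda}(X)} L(Y)$ over \emph{all} $\lambda$-bounded subobjects; a countable chain $C_{0} \subset C_{1} \subset \cdots$ is not cofinal among the subobjects of $C = \bigcup_{k} C_{k}$, and $L$ is not known to preserve $\omega$-chain colimits, since its generating data is only $\alpha$-bounded. (This is why the proof of the general Lemma \ref{lem 11} iterates the construction $\lambda$ times, so that regularity of $\lambda$ makes the chain cofinal and yields the displayed isomorphism there.) Fortunately your argument does not need the equality, only the inclusion $\bigcup_{k} Z(C_{k+1})_{i} \subseteq Z(C)_{i}$ inside $Z(Y)_{i}$, which does follow from the stated properties: $L$ preserves monomorphisms, so each $Z(C_{k+1})_{i} \subset Z(C)_{i}$; every section of $\Delta^{n} \times C_{j}$ lies in some $\Delta^{n} \times (C_{k})_{j}$ and is carried by $\theta_{n,i}$ into $Z(C_{k+1})_{i} \subset Z(C)_{i}$; and compatibility of the resulting factorization with the canonical map
\begin{equation*}
(\partial\Delta^{n} \times C_{i}) \cup (\Delta^{n} \times (C_{i} \cap X_{i})) \to Z(C)_{i}
\end{equation*}
follows from naturality of the fibrant model map and monicity of $Z(C)_{i} \to Z(Y)_{i}$. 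With that repair (or with the alternative of iterating $\lambda$ times as in Lemma \ref{lem 11}), your proof is correct and coincides with the paper's.
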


Say that a map $p: X \to Y$ is a {\it pro-fibration} if it has the right
lifting property with respect to all cofibrations $A \to B$ which are
pro-equivalences.

\begin{lemma}\label{lem 2}
A map $p: X \to Y$ is a pro-fibration and a pro-equivalence if and
only if it has the right lifting property with respect to all
cofibrations.
\end{lemma}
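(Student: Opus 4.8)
The plan is to treat ``right lifting property with respect to all cofibrations'' as the analogue of being a trivial fibration, and to pass between it and the pair (pro-fibration, pro-equivalence) by the standard factorization and retract arguments, with the filteredness of $I$ used to replace homotopy colimits by ordinary filtered colimits throughout. First I would dispose of the easy half: if $p \colon X \to Y$ has the right lifting property with respect to all cofibrations, then it has that property with respect to the trivial cofibrations in particular, since these form a subclass of the cofibrations, so $p$ is at once a pro-fibration. It then remains only to see that such a $p$ is also a pro-equivalence, and conversely that a pro-fibration which is a pro-equivalence has the full lifting property.

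The genuinely homotopical step, which I expect to be the main obstacle, is showing that the lifting-theoretic hypothesis implies the pro-equivalence condition. I would reduce to a levelwise statement using the free diagram functors $F_{i}$ left adjoint to evaluation at $i \in I$: since $F_{i}$ carries a monomorphism of simplicial presheaves to a (levelwise) cofibration of $I$-diagrams, the adjunction converts the lifting hypothesis on $p$ into the assertion that each $p_{i} \colon X_{i} \to Y_{i}$ has the right lifting property with respect to all cofibrations of $s\Pre$, hence is an injective trivial fibration and in particular a local weak equivalence. Because every object is cofibrant and $Z$ is injective fibrant, each induced map $\mathbf{hom}(Y_{i},Z) \to \mathbf{hom}(X_{i},Z)$ is then a weak equivalence of simplicial sets; as $I$ is left filtered and weak equivalences of simplicial sets are closed under filtered colimits, the map $\varinjlim_{i}\mathbf{hom}(Y_{i},Z) \to \varinjlim_{i}\mathbf{hom}(X_{i},Z)$ is a weak equivalence for every injective fibrant $Z$, which is exactly the pro-equivalence condition. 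The two ingredients that carry the real weight here are the levelwise detection of injective trivial fibrations and the exactness of filtered colimits, both standard facts of local homotopy theory.

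For the converse, suppose $p$ is a pro-fibration and a pro-equivalence. I would factor $p = q \cdot j$ with $j \colon X \to W$ a cofibration and $q \colon W \to Y$ a map having the right lifting property with respect to all cofibrations, via the small object argument applied to the generating set of bounded cofibrations. By the implication established above, $q$ is a pro-equivalence; since $p$ is a pro-equivalence and pro-equivalences inherit the two-out-of-three property from weak equivalences of simplicial sets, the cofibration $j$ is a pro-equivalence as well, hence a trivial cofibration. Because $p$ is a pro-fibration, the lifting problem
\[
\xymatrix{
X \ar[r]^{1} \ar[d]_{j} & X \ar[d]^{p} \\
W \ar[r]_{q} \ar@{.>}[ur]|{r} & Y
}
\]
has a solution $r$, and the relations $rj = 1_{X}$ and $pr = q$ exhibit $p$ as a retract of $q$. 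The class of maps with the right lifting property with respect to all cofibrations is closed under retracts, so $p$ has that property, completing the argument. This retract step and the appeal to two-out-of-three are entirely formal; only the reduction in the second paragraph requires genuine input.
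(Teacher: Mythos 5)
Your proof is correct, but the substantive direction takes a genuinely different route from the paper's. The easy implication agrees in content with the paper, which simply asserts that a map with the right lifting property with respect to all cofibrations is a sectionwise equivalence and hence a pro-equivalence; your transposition through the free-diagram adjunction, together with the closure of weak equivalences of simplicial sets under filtered colimits, is a correct expansion of that one-line assertion. For the converse, both arguments begin identically, factoring $p = q \cdot j$ with $j$ a cofibration and $q$ a trivial injective fibration, and both conclude via two-out-of-three that $j$ is a pro-equivalence; but where you then run the classical retract argument --- lifting $r \colon W \to X$ against $p$ so that $rj = 1_{X}$ and $pr = q$ exhibit $p$ as a retract of $q$ --- the paper instead fixes a $\lambda$-bounded lifting problem $i \colon A \to B$ against $p$, lifts $B$ through $q$ to obtain $\theta \colon B \to V$, and invokes the bounded monomorphism statement (Lemma \ref{lem 1}) to produce a $\lambda$-bounded subobject $D \subset V$ with $\theta(B) \subset D$ and $D \cap X \to D$ a cofibration and pro-equivalence, through which the original problem factors and is then solved by one lift against $p$. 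Your version is shorter and purely formal, avoids Lemma \ref{lem 1} entirely, and would transfer verbatim to the general diagram setting of Section 3. The trade-off is that your lift $r$ uses the right lifting property of $p$ against the typically unbounded trivial cofibration $j$, i.e.\ the full strength of the pro-fibration hypothesis, whereas the paper's route only ever lifts against $\lambda$-bounded trivial cofibrations; read together with Lemma \ref{lem 4}, the paper's bounded refinement is what shows that the $\lambda$-bounded trivial cofibrations suffice to detect pro-fibrations, which is the input for cofibrant generation in Proposition \ref{prop 5} --- so Lemma \ref{lem 1} remains indispensable to the overall program even though, as your argument shows, it can be bypassed in the proof of Lemma \ref{lem 2} itself.
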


\begin{proof}
Suppose that $p$ is a pro-fibration and a pro-equivalence. We show
that it has the right lifting property with respect to all
$\lambda$-bounded cofibrations.

The converse assertion is clear: if
$p$ has the right lifting property with respect to all cofibrations,
then it is a sectionwise equivalence and hence a pro-equivalence, and
it has the right lifting property with respect to a cofibrations which
are pro-equivalences.
\medskip

Suppose given a lifting problem
\begin{equation*}
\xymatrix{
A \ar[r] \ar[d]_{i} & X \ar[d]^{p} \\
B \ar[r] \ar@{.>}[ur] & Y
}
\end{equation*}
where $i$ is an $\lambda$-bounded cofibration and $p$ is a
pro-fibration and a pro-equivalence. We show that the indcated lift
exists.

This will be true for all $\lambda$-bounded cofibrations, and these
generate the class of all cofibrations, so it follows that $p$ has
the right lifting property with respect to all cofibrations.

Factorize $p$ as $p=q
\cdot j$ where $q$ is a trivial injective fibration and $j$ is a
cofibration.  Then there is a diagram
\begin{equation*}
\xymatrix@R=10pt{
A \ar[rr] \ar[dd]_{i} && X \ar[dd]^{p} \ar[dl]_{j} \\
& V \ar[dr]^{q} \\
B \ar[ur]^{\theta} \ar[rr] && Y
}
\end{equation*}
The cofibration $j$ is a pro-equivalence, and the image $\theta(B)$ is
$\lambda$-bounded, so there is a subobject $D \subset V$ with
$\theta(B) \subset D$, such that the map $D \cap X \to D$ is a
pro-equivalence, by Lemma \ref{lem 1}. We have found a factorization
\begin{equation*}
\xymatrix{
A \ar[r] \ar[d] & D \cap X \ar[d] \ar[r] & X \ar[d] \\
B \ar[r] & D \ar[r] \ar@{.>}[ur] & Y
}
\end{equation*}
of the original diagram, with a pro-equivalence $D \cap X \to D$, and
the lifting problem has the indicated solution.
\end{proof}

\begin{corollary}\label{cor 3}
A map $p: X \to Y$ is a pro-fibration and a pro-equivalence if and
only if it is a trivial injective fibration.
\end{corollary}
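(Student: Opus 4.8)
The plan is to obtain this as an essentially immediate consequence of Lemma \ref{lem 2}. That lemma already characterizes the maps which are simultaneously pro-fibrations and pro-equivalences as precisely the maps having the right lifting property with respect to all cofibrations, so the only remaining task is to match this lifting-property class against the trivial injective fibrations.

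For this I would invoke the standard description of the injective model structure on the diagram category $s\Pre^{I}$, in which the cofibrations are the (sectionwise) monomorphisms and the weak equivalences are the sectionwise local weak equivalences. In any such cofibrantly generated injective structure, a map is a trivial fibration---that is, an injective fibration which is also a weak equivalence---if and only if it has the right lifting property with respect to every cofibration. This is the usual factorization and retract characterization of trivial fibrations, established for simplicial presheaves and their diagram categories in \cite{LocHom}, and it is the fact I would cite rather than reprove.

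Combining the two statements: a map $p \colon X \to Y$ is a pro-fibration and a pro-equivalence if and only if, by Lemma \ref{lem 2}, it has the right lifting property with respect to all cofibrations, and this in turn holds if and only if $p$ is a trivial injective fibration. I do not expect a genuine obstacle here; the substantive content has already been absorbed into Lemma \ref{lem 2}, and ultimately into the bounded monomorphism statement Lemma \ref{lem 1}, and the corollary merely repackages that lifting criterion in the language of the ambient injective model structure. The one point requiring care is to confirm that the injective structure being referenced is the one on the $I$-diagrams $s\Pre^{I}$, rather than on $s\Pre$ itself, so that ``trivial injective fibration'' and ``right lifting property against all monomorphisms of $I$-diagrams'' name the same class of maps.
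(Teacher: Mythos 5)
Your proposal is correct and matches the paper's intent exactly: the paper states Corollary \ref{cor 3} without a separate proof precisely because it is the immediate combination of Lemma \ref{lem 2} with the standard characterization of trivial fibrations in the injective model structure on $s\Pre^{I}$ (where cofibrations are the sectionwise monomorphisms) as the maps having the right lifting property with respect to all cofibrations. Your closing caveat --- that the injective structure in question is the one on $I$-diagrams rather than on $s\Pre$ itself --- is the right point to check, and it is consistent with the paper's usage, e.g.\ in the factorization $p = q \cdot j$ in the proof of Lemma \ref{lem 2}.
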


The following statements are also clear:

\begin{lemma}\label{lem 4}
\begin{itemize}
\item[1)] A map $p$ is a pro-fibration if and only it has the right
  lifting property with respect to all $\lambda$-bounded cofibrations
  which are pro-equivalences.
\item[2)] The class of maps which are cofibrations and
  pro-equivalences is closed under pushout.
\end{itemize}
\end{lemma}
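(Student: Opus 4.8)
The two assertions are of quite different character: part 2) is a direct computation with function complexes, whereas part 1) is a transfinite (Zorn) argument driven by the bounded monomorphism statement of Lemma \ref{lem 1}. I would dispose of 2) first, since it feeds into 1).

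For 2), consider a pushout square in which a cofibration $i\colon X\to Y$ that is a pro-equivalence is pushed out along some $X\to X'$ to the cofibration $i'\colon X'\to Y'$ with $Y'=Y\cup_X X'$. Cofibrations are closed under pushout, so only the pro-equivalence of $i'$ needs proof. For injective fibrant $Z$ the functor $\mathbf{hom}(-,Z)$ carries the pushout square to a pullback square of simplicial sets whose horizontal legs $\mathbf{hom}(Y',Z)\to\mathbf{hom}(X',Z)$ and $\mathbf{hom}(Y,Z)\to\mathbf{hom}(X,Z)$ are Kan fibrations, since $Z$ is fibrant and $i,i'$ are cofibrations. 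Applying $\varinjlim_i$, and using that over left filtered $I$ these are filtered colimits which commute with finite limits and preserve Kan fibrations, the colimit square is again a pullback with Kan-fibration legs. Its bottom leg $\varinjlim_i\mathbf{hom}(Y_i,Z)\to\varinjlim_i\mathbf{hom}(X_i,Z)$ is a trivial fibration, by the reformulation of pro-equivalence recorded just before Lemma \ref{lem 1}. Since the top leg is the pullback of the bottom leg, and a pullback of a trivial fibration is a trivial fibration, the map $\varinjlim_i\mathbf{hom}(Y'_i,Z)\to\varinjlim_i\mathbf{hom}(X'_i,Z)$ is a weak equivalence for all such $Z$, so $i'$ is a pro-equivalence.

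For 1), the forward implication is the definition of pro-fibration. For the converse, suppose $p\colon X\to Y$ has the right lifting property with respect to all $\lambda$-bounded cofibrations that are pro-equivalences, and suppose given a lifting problem against an arbitrary cofibration $i\colon A\to B$ that is a pro-equivalence. I would run Zorn's lemma on the poset of pairs $(C,\theta)$ with $A\subseteq C\subseteq B$ such that $A\to C$ is a cofibration and a pro-equivalence and $\theta\colon C\to X$ is a partial lift restricting to the given map on $A$ and compatible with the composite $C\to B\to Y$. A maximal element $(C,\theta)$ must satisfy $C=B$: if not, then $C\to B$ is a cofibration and, by two-out-of-three for pro-equivalences (valid since a pro-equivalence is detected by weak equivalences of the colimit function complexes), a pro-equivalence; choosing a $\lambda$-bounded subobject not contained in $C$ and applying Lemma \ref{lem 1} to $C\to B$ produces a $\lambda$-bounded $D\subseteq B$ with $D\not\subseteq C$ and $D\cap C\to D$ a $\lambda$-bounded cofibration which is a pro-equivalence. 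The hypothesis on $p$ solves the induced lifting problem over $D$, and this solution glues with $\theta$ along $D\cap C$ to a lift on $C\cup D$; part 2) shows $C\to C\cup D$ (a pushout of $D\cap C\to D$) is a pro-equivalence, whence $A\to C\cup D$ is a pro-equivalence, contradicting maximality.

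The step I expect to require the most care is the verification that chains in this poset have upper bounds, i.e. that a cofibration which is a pro-equivalence is stable under the filtered colimits (transfinite compositions) arising as unions of chains. At a limit stage $C=\bigcup_s C_s$ one has $\mathbf{hom}(C_i,Z)=\varprojlim_s\mathbf{hom}((C_s)_i,Z)$, an inverse limit of a tower of Kan fibrations, and one must commute $\varinjlim_i$ past this tower. The way around the naive failure of this interchange is to observe that each transition map $\varinjlim_i\mathbf{hom}((C_{s'})_i,Z)\to\varinjlim_i\mathbf{hom}((C_s)_i,Z)$ is a trivial fibration, since both terms are trivial fibrations over $\varinjlim_i\mathbf{hom}(A_i,Z)$, and that inverse limits of towers of trivial fibrations are again trivial fibrations; together with the finiteness of $\partial\Delta^n\to\Delta^n$ this yields that the limit map is a trivial fibration and hence that $A\to C$ is a pro-equivalence. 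Establishing this closure property carefully, together with the closure under composition already used above, is the real content hiding behind the word ``clear,'' whereas the pushout assertion 2) and the Zorn bookkeeping in 1) are routine once Lemma \ref{lem 1} is in hand.
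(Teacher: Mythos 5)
The paper offers no argument at all for this lemma (it is declared ``clear''), so your proposal is supplying genuine content, and much of it is right. Your proof of part 2) is correct and is surely what the paper has in mind: pushouts of $I$-diagrams are computed sectionwise, $\mathbf{hom}(-,Z)$ converts them to pullbacks whose legs are Kan fibrations by the simplicial model structure axiom {\bf SM7}, filtered colimits over $I^{op}$ preserve pullbacks, Kan fibrations and hence trivial fibrations, and a pullback of a trivial fibration is a trivial fibration. For part 1), your Zorn skeleton --- two-out-of-three to see that $C \to B$ is a pro-equivalence, Lemma \ref{lem 1} applied to $C \to B$ to extract a $\lambda$-bounded $D$ with $D \cap C \to D$ a bounded trivial cofibration, the lifting hypothesis over $D$, gluing along $D \cap C$, and part 2) to contradict maximality --- is the standard bounded-cofibration argument of \cite{LocHom}, and the successor step is sound.

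The gap is exactly where you yourself located the ``real content'': the upper bounds for chains. You correctly note $\mathbf{hom}(C_{i},Z) = \varprojlim_{s} \mathbf{hom}((C_{s})_{i},Z)$ and correctly show the transition maps $\varinjlim_{i}\mathbf{hom}((C_{s'})_{i},Z) \to \varinjlim_{i}\mathbf{hom}((C_{s})_{i},Z)$ are trivial fibrations, so that $\varprojlim_{s}\varinjlim_{i}\mathbf{hom}((C_{s})_{i},Z) \to \varinjlim_{i}\mathbf{hom}(A_{i},Z)$ is a trivial fibration (at least for chains of countable cofinality; for longer chains the tower is itself discontinuous at internal limit ordinals, for the very reason at issue). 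But the map you must control is $\varinjlim_{i}\varprojlim_{s}\mathbf{hom}((C_{s})_{i},Z) \to \varinjlim_{i}\mathbf{hom}(A_{i},Z)$, and the canonical comparison runs $\varinjlim_{i}\varprojlim_{s} \to \varprojlim_{s}\varinjlim_{i}$ --- the wrong way, and it is neither an isomorphism nor visibly an equivalence. The appeal to finiteness of $\partial\Delta^{n} \subset \Delta^{n}$ does not bridge this: finiteness lets the lifting data factor through a single stage $i_{0}$, giving a problem $(\partial\Delta^{n} \times C_{i_{0}}) \cup (\Delta^{n} \times A_{i_{0}}) \to Z$ over all of the union $C$, whose solution must be a lift over all of $C$ at a single refinement $j \to i_{0}$. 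What your tower argument produces instead is lifts over the individual $C_{s}$ at a receding transfinite chain of refinements $\dots \to j_{s+1} \to j_{s} \to \dots \to i_{0}$, and a left filtered category only guarantees cones over \emph{finite} diagrams, so there need be no index dominating all the $j_{s}$, and the stagewise lifts assemble to no element of $\varinjlim_{i}\mathbf{hom}(C_{i},Z)$. So closure of cofibration-pro-equivalences under unions of chains --- on which your Zorn argument, and implicitly the paper's own small object argument for {\bf CM5}, depends --- is not established by what you wrote. A repair must produce the refinement index at a bounded cost, e.g.\ by reducing the chain problem to $\lambda$-bounded data in the style of the approximation bookkeeping in the proofs of Lemma \ref{lem 1} and Lemma \ref{lem 11}, where the interchange is legitimized by the fibrant model functor commuting with $\lambda$-filtered colimits of $\lambda$-bounded subobjects; as it stands this step fails, not merely lacks detail.
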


We can now prove the following:

\begin{proposition}\label{prop 5}
Suppose that the category $I$ is left filtered, and that $\mathcal{C}$
is a Grothendieck site. The category $s\Pre^{I}$ of $I$-diagrams in
simplicial presheaves on $\mathcal{C}$, together with the classes of
cofibrations, pro-equivalences and pro-fibrations, satisfies the
axioms for a left proper closed simplicial model category. This model
structure is cofibrantly generated.
\end{proposition}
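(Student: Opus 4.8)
The plan is to verify the closed model axioms CM1–CM5 together with the properness and simplicial-structure requirements, treating pro-equivalences as the weak equivalences, ordinary (sectionwise) cofibrations as the cofibrations, and the pro-fibrations of the preceding definition as the fibrations. Let me think about each piece.

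CM1 (limits/colimits): these exist in $s\Pre^I$ since they are computed sectionwise. CM2 (two-out-of-three) for pro-equivalences: a pro-equivalence is detected by the induced maps $\hocolim_i \mathbf{hom}(Y_i,Z)\to\hocolim_i\mathbf{hom}(X_i,Z)$ being weak equivalences for all injective fibrant $Z$; since $I$ is filtered the $\hocolim$ is a filtered colimit and this is a genuine weak-equivalence condition, so two-out-of-three is inherited from two-out-of-three for simplicial-set weak equivalences. CM3 (retracts): cofibrations and pro-fibrations are each defined by lifting properties hence closed under retracts, and pro-equivalences are closed under retracts because weak equivalences of simplicial sets are.

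The factorization axiom CM5 is where the real work lies. Corollary~\ref{cor 3} identifies the maps that are simultaneously pro-fibrations and pro-equivalences with the trivial injective fibrations, so one factorization $p=q\cdot j$ (cofibration followed by trivial-fibration-and-pro-equivalence) is just the injective-model factorization. For the other factorization—a trivial cofibration (cofibration that is a pro-equivalence) followed by a pro-fibration—I would run a small-object argument. Here Lemma~\ref{lem 4}(1) is the key reduction: a map is a pro-fibration iff it has the right lifting property against all $\lambda$-bounded cofibrations that are pro-equivalences, so the set $J$ of such $\lambda$-bounded trivial cofibrations is a \emph{set} generating the pro-fibrations, and the small-object argument with respect to $J$ produces the desired factorization. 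The pushouts introduced by the small-object argument are cofibrations and pro-equivalences by Lemma~\ref{lem 4}(2), and the countable (or $\lambda$-filtered) composite stays a pro-equivalence because filtered colimits of the function-complex colimits preserve weak equivalences; this is exactly the bounded-monomorphism input of Lemma~\ref{lem 1} repackaged as the generating set. CM4 (lifting): trivial cofibrations lift against pro-fibrations by definition, and cofibrations lift against trivial pro-fibrations by Corollary~\ref{cor 3}. This identifies $J$ as a generating set of trivial cofibrations and the $\lambda$-bounded cofibrations as a generating set of cofibrations, so the structure is cofibrantly generated.

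For the simplicial and properness claims: the simplicial structure is the sectionwise one inherited from $s\Pre$, and the pushout–product (SM7) axiom reduces, via the adjunction between tensoring with simplicial sets and the function complexes, to the statement that the maps $\hocolim_i\mathbf{hom}(Y_i,Z)\to\hocolim_i\mathbf{hom}(X_i,Z)$ behave well under the simplicial structure—again a filtered-colimit statement over the corresponding assertion in $s\Pre$. Left properness follows because the class of cofibrations-that-are-pro-equivalences is closed under pushout by Lemma~\ref{lem 4}(2), and a pushout of a pro-equivalence along a cofibration remains a pro-equivalence since the relevant function-complex diagram is a homotopy pushout detected sectionwise after applying $\hocolim_i$. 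I expect the main obstacle to be the trivial-cofibration/pro-fibration factorization in CM5: one must confirm that the transfinite composite arising from the small-object argument on $J$ is genuinely a pro-equivalence and not merely a sectionwise map, which is precisely where Lemma~\ref{lem 1}'s bounded-monomorphism statement, combined with the filtered-colimit preservation of weak equivalences of function complexes, does the essential work.
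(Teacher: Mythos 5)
Your overall plan coincides with the paper's own proof, which is simply a compressed version of it: the (cofibration, trivial pro-fibration) factorization and CM4 come from Corollary \ref{cor 3}, the (trivial cofibration, pro-fibration) factorization from a small-object argument based on Lemma \ref{lem 4}, SM7 from the displayed pushout-product statement, left properness directly, and cofibrant generation from Lemma \ref{lem 1}. So the architecture is right, and you correctly identify where the real content sits.

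But the justification you give at exactly the step you flag as the main obstacle is wrong as stated. You assert that the transfinite composite produced by the small-object argument ``stays a pro-equivalence because filtered colimits of the function-complex colimits preserve weak equivalences,'' and that this is Lemma \ref{lem 1} ``repackaged.'' Neither claim holds. The functor $\mathbf{hom}(-,Z)$ is contravariant, so applying $\varinjlim_{i}\mathbf{hom}((-)_{i},Z)$ to a transfinite composition $X_{0} \to X_{1} \to \dots \to X_{\beta} = \varinjlim_{s} X_{s}$ does not yield a filtered colimit of weak equivalences; it yields (modulo an interchange of $\varinjlim_{I}$ with $\varprojlim_{s}$ that is itself not automatic) an inverse limit of a tower. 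The mechanism that actually works is dual: each stage $X_{s} \to X_{s+1}$ is a cofibration and a pro-equivalence, hence induces a \emph{trivial Kan fibration} $\varinjlim_{i}\mathbf{hom}((X_{s+1})_{i},Z) \to \varinjlim_{i}\mathbf{hom}((X_{s})_{i},Z)$ (cofibrations give sectionwise fibrations into injective fibrant $Z$, and filtered colimits preserve fibrations and trivial fibrations because $\partial\Delta^{n} \subset \Delta^{n}$ and the horns are finite), and trivial fibrations are closed under composition and inverse limits of towers; one must then still either justify the colimit--limit interchange over $I$ or run a direct lifting-after-refinement argument, noting that $I$ being left filtered only supplies refinements for finitely many stages at a time. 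Lemma \ref{lem 1}'s role is elsewhere: it underwrites Lemma \ref{lem 4}(1), the detection of pro-fibrations by the \emph{set} of $\lambda$-bounded trivial cofibrations, and the cofibrant-generation claim. A similar variance slip appears in your left-properness argument: $\mathbf{hom}(-,Z)$ converts the pushout into a pullback, so what is really being used is \emph{right} properness of simplicial sets (pullback of a weak equivalence along a fibration), applied after taking the filtered colimit over $I$, not a ``homotopy pushout detected sectionwise.'' To be fair, the paper compresses the factorization step into ``CM5 follows from Corollary \ref{cor 3} and Lemma \ref{lem 4},'' so your write-up is no less detailed than the source; but the reason you give for the crucial closure of trivial cofibrations under transfinite composition would not survive being written out, and that closure is the one point in the whole proposition that genuinely needs care.
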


\begin{proof}
The factorization axiom {\bf CM5} follows from Corollary \ref{cor 3}
and Lemma \ref{lem 4}. The lifting axiom {\bf CM4} follows from
Corollary \ref{cor 3}. The other closed model axioms are
automatically true. The function complex $\mathbf{hom}(X,Y)$ is the
standard one, and one shows that, given a cofibration $A \to B$ of
$I$-diagrams and a cofibration $K \to L$ of simplicial sets, then the
map
\begin{equation*}
(A \times L) \cup (B \times K) \to B \times L
\end{equation*}
is a cofibration which is a pro-equivalence if $A \to B$ is a
pro-equivalence or $K \to L$ is a weak equivalence.

Left properness is trivial to verify, and cofibrant generation is a
consequence of Lemma \ref{lem 1}.
\end{proof}

\section{Subdivisions and simplex categories}

Suppose that $X$ is a simplicial set. The poset $NX$ has objects given
by all non-degenerate simplices $\sigma \in X$, and we say that
$\sigma \leq \tau$ if $\sigma$ is a member of the subcomplex of $X$
which is generated by $\tau$.

Suppose that $K$ is a finite simplicial complex, in the sense that $K$
is a subcomplex of some simplex $\Delta^{N}$. Then $\sigma \leq \tau$
in $NK$ if $\sigma$ is a face of $\tau$; furthermore, $\sigma$ is a
face of $\tau$ in a unique way. The nerve $BNK$ is often said to be
the {\it order complex} of the simplicial complex $K$ \cite{Kozlov}.
The non-degenerate
simplices $\sigma$ of $K$ are those for which the canonical map
$\sigma: \Delta^{n} \to K$ is a monomorphism.

The {\it simplex category} $\mathbf{\Delta}/X$ for a simplicial set
$X$ has objects consisting of all simplicial set maps $\sigma:
\Delta^{n} \to X$. The morphisms $\theta: \tau \to \sigma$ are
commutative diagrams of simplicial set maps
    \begin{equation*}
      \xymatrix@R=8pt{
        \Delta^{m} \ar[dr]^{\tau} \ar[dd]_{\theta} \\
        & X \\
        \Delta^{n} \ar[ur]_{\sigma}
      }
      \end{equation*}
    
Suppose that $X$ is a simplicial set and that $L$ is a finite
simplicial complex. A map $f: L \to X$ determines a functor $f:
\mathbf{\Delta}/L \to \mathbf{\Delta}/X$ of simplex categories in the
obvious way.

Since $L$ is a simplicial complex, there is an inclusion functor $NL
\to \mathbf{\Delta}/L$, and we have the following:

\begin{lemma}\label{lem 6}
  Suppose that $L$ is a finite simplicial complex.
  Then the composite map
  \begin{equation*}
    \varinjlim_{\Delta^{n} \subset L} \Delta^{n} \to \varinjlim_{\Delta^{n} \to L}\ \Delta^{n} \xrightarrow{\cong} L
  \end{equation*}
  is an isomorphism. A simplicial set map
$f: L \to X$ is
defined by the composite functor
\begin{equation*}
\tilde{f}: NL \to \mathbf{\Delta}/L \xrightarrow{f_{\ast}} \mathbf{\Delta}/X.
\end{equation*}
\end{lemma}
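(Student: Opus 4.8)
The second map, $\varinjlim_{\Delta^{n} \to L}\Delta^{n} \xrightarrow{\cong} L$, is the standard presentation of a simplicial set as the colimit of its simplices over the full simplex category $\mathbf{\Delta}/L$, so the content of the first assertion is that restricting this colimit along the inclusion $NL \hookrightarrow \mathbf{\Delta}/L$ does not change it. The plan is to prove this by showing that the inclusion $NL \to \mathbf{\Delta}/L$ is \emph{final} (cofinal), which immediately forces the induced comparison of colimits to be an isomorphism. (Alternatively one could compute both colimits degreewise in $\mathbf{Set}$ and check that the natural map to $L$ is a bijection in each degree; the two arguments use the same ingredients.) The only input needed beyond the formal theory of final functors is the Eilenberg--Zilber lemma, together with the two defining features of a finite simplicial complex recorded above: a simplex is non-degenerate exactly when its characteristic map is a monomorphism, and each face relation in $NL$ is realized by a unique face inclusion.

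First I would check that for every object $\tau \colon \Delta^{m} \to L$ of $\mathbf{\Delta}/L$ the comma category $\tau/NL$ of pairs $(\sigma, \theta)$, with $\sigma \in NL$ and $\theta$ a morphism $\tau \to \sigma$ in $\mathbf{\Delta}/L$, is nonempty. This is the existence half of Eilenberg--Zilber: factor $\tau = \sigma \cdot s$ with $s$ a surjection and $\sigma$ non-degenerate, so that $\sigma \in NL$ and $s$ supplies the required morphism $\tau \to \sigma$.

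The main step, and the point where the simplicial-complex hypothesis is essential, is connectivity of $\tau/NL$. Given two objects, represented by maps $\theta_{i} \colon \Delta^{m} \to \Delta^{n_{i}}$ into non-degenerate simplices $\sigma_{i}$ with $\sigma_{i}\cdot\theta_{i} = \tau$ for $i = 1,2$, I would take the epi--mono factorizations $\theta_{i} = \iota_{i}\cdot s_{i}$. Since $\sigma_{i}$ is a monomorphism and $\iota_{i}$ is injective, each composite $\sigma_{i}\cdot\iota_{i}$ is again a monomorphism, hence a non-degenerate simplex $\sigma_{i}'$, and $\sigma_{i}'\cdot s_{i} = \tau$ is an Eilenberg--Zilber factorization of $\tau$. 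By the uniqueness clause of Eilenberg--Zilber, $\sigma_{1}' = \sigma_{2}'$ and $s_{1} = s_{2}$; write these common values as $\sigma_{0}$ and $s$. Because $\sigma_{0}$ is a face of each $\sigma_{i}$ via $\iota_{i}$, the arrows $\iota_{i}$ define morphisms $(\sigma_{0}, s) \to (\sigma_{i}, \theta_{i})$ in the comma category, so both given objects are connected to the common object $(\sigma_{0}, s)$. The complex hypothesis is used twice here: it guarantees that faces of non-degenerate simplices remain non-degenerate, so $\sigma_{0} \in NL$, and it makes the face inclusions $\iota_{i}$ unique, so that the relevant diagrams commute on the nose. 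On a general simplicial set $NL$ need not be closed under faces and this argument breaks down; this is the main obstacle, and it is precisely the place the hypothesis is indispensable. With finality established, the first map is an isomorphism.

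For the second assertion I would argue formally. A functor $NL \to \mathbf{\Delta}/X$ is the same datum as a cocone on the diagram $\sigma \mapsto \Delta^{n_{\sigma}}$, $\sigma \in NL$, with vertex $X$; equivalently, by the first part, a simplicial set map out of $\varinjlim_{\Delta^{n} \subset L}\Delta^{n} \cong L$. The composite $\tilde{f} = f_{\ast}\circ(NL \hookrightarrow \mathbf{\Delta}/L)$ sends $\sigma \colon \Delta^{n} \to L$ to $f\cdot\sigma$, so the corresponding cocone is $\{f\cdot\sigma\}_{\sigma}$, and the map it induces out of the colimit is $f$ itself. Thus $f$ is recovered from $\tilde{f}$, as claimed.
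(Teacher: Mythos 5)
Your proof is correct, but it runs on a genuinely different mechanism than the paper's. The paper argues directly on the comparison of colimits: it observes that two simplices $x \in \Delta^{r} \to \Delta^{n} \xrightarrow{\sigma} L$ and $y \in \Delta^{r} \to \Delta^{m} \xrightarrow{\tau} L$ with the same image in $L$ fit into a square over $L$, and then uses the fact that the pullback $\Delta^{m} \times_{L} \Delta^{n}$ of two non-degenerate simplices is again a non-degenerate simplex of $L$ --- so the simplicial-complex hypothesis enters as \emph{closure of non-degenerate simplices under intersection}, which makes the identifications in the two colimits match up; the second assertion then follows from the induced commutative square. You instead prove that the inclusion $NL \hookrightarrow \mathbf{\Delta}/L$ is final, locating the hypothesis in two other facts: monic characteristic maps force $\sigma_{i}\cdot\iota_{i}$ to be non-degenerate, and face inclusions are unique, so Eilenberg--Zilber uniqueness connects the comma category $\tau/NL$ through the canonical object $(\sigma_{0},s)$. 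Your route is more formal and more reusable (finality gives the isomorphism for free, and your nonemptiness step makes explicit the surjectivity that the paper leaves implicit), and it isolates exactly where the argument breaks for a general simplicial set; the paper's route is shorter, and its intersection property is the one reused verbatim in the proof of Lemma~\ref{lem 10}, so it integrates better with the rest of the section. One small overstatement to repair: a general functor $NL \to \mathbf{\Delta}/X$ is \emph{not} literally the same datum as a cocone on $\sigma \mapsto \Delta^{n_{\sigma}}$ with vertex $X$ (such a functor need not preserve dimensions and carries extra data on morphisms); but since you apply this only to the specific composite $\tilde{f}$, whose cocone $\{f\cdot\sigma\}_{\sigma}$ you compute explicitly before invoking the universal property, the proof of the stated claim is unaffected.
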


 Lemma \ref{lem 6} says that a finite simplicial complex is a colimit
 of its non-degenerate simplices. 

\begin{proof}
The $r$-simplices $x \in \Delta^{n}
\xrightarrow{\sigma} L$ and $y \in \Delta^{m} \xrightarrow{\tau} L$
have the same image in $K$ if and only if there is a diagram
\begin{equation*}
\xymatrix{
\Delta^{r} \ar[r]^{x} \ar[d]_{y} & \Delta^{n} \ar[d]^{\sigma} \\
\Delta^{m} \ar[r]_{\tau} & L
}
\end{equation*}
Since $L$ is a finite simplicial complex, the pullback
$\Delta^{m} \times_{L} \Delta^{n}$ is a non-degenerate simplex of $L$
if $\sigma$ and $\tau$ are non-degenerate. It follows that the composite map
\begin{equation*}
\varinjlim_{\Delta^{n} \subset L}\ \Delta^{n} \to \varinjlim_{\Delta^{m} \to L}\ \Delta^{m} \xrightarrow{\cong} L
\end{equation*}
is an isomorphism, and so there is a commutative diagram
\begin{equation*}
  \xymatrix{
    \varinjlim_{\Delta^{n} \subset L}\ \Delta^{n} \ar[r]^{\tilde{f}_{\ast}} \ar[d]_{\cong}
    & \varinjlim_{\Delta^{n} \to X}\ \Delta^{n} \ar[d]^{\cong} \\
    L \ar[r]_{f} & X
  }
  \end{equation*}
\end{proof}

\begin{corollary}\label{cor 7}
Suppose that $g: K \to L$ is a morphism of simplicial complexes.  Then
the map $g$ is induced by a functor $g_{\ast}: NK \to NL$ which takes a
non-degenerate simplex $\sigma$ to the non-degenerate simplex
which generates the subcomplex $\langle g(\sigma) \rangle$ of $L$.
\end{corollary}

\begin{proof}
  Consider the picture
  \begin{equation*}
    \xymatrix{
      \varinjlim_{\Delta^{n} \subset K}\ \Delta^{n} \ar[d]_{\cong} \ar@{.>}[r]
      & \varinjlim_{\Delta^{m} \subset L}\ \Delta^{m} \ar[d]^{\cong} \\
      \varinjlim_{\Delta^{n} \to K}\ \Delta^{n} \ar[d]_{\cong} \ar[r]^{g_{\ast}}
      & \varinjlim_{\Delta^{m} \to L}\ \Delta^{m} \ar[d]^{\cong} \\
      K \ar[r]_{g} & L
      }
  \end{equation*}
  in which the vertical maps are isomorphisms by Lemma \ref{lem 6}.

  If $\sigma: \Delta^{n} \to K$ is a non-degenerate simplex, then the composite
  \begin{equation*}
    \Delta^{n} \xrightarrow{\sigma} K \xrightarrow{g} L
  \end{equation*}
  has a unique factorization
  \begin{equation*}
    \Delta^{n} \xrightarrow{s_{\sigma}} \Delta^{r} \xrightarrow{d_{\sigma}} L,
  \end{equation*}
  where $s_{\sigma}$ is a codegeneracy and $d_{\sigma}$ is a
  non-degenerate simplex of $L$. Write $g_{\ast}(\sigma) =
  d_{\sigma}$, and observe that the assignment $\sigma \mapsto
  g_{\ast}(\sigma)$ defines a functor $g_{\ast}: NK \to NL$. The
  codegeneracies $s_{\sigma}$ and the functor $g_{\ast}$ define the
  dotted arrow map in the diagram, in the sense that there are commutative diagrams
  \begin{equation*}
    \xymatrix{
      \Delta^{n} \ar[r]^{s_{\sigma}} \ar[d]_{in_{\sigma}}
      & \Delta^{r} \ar[d]^{in_{g_{\ast}(\sigma)}} \\
        \varinjlim_{\Delta^{n} \subset K}\ \Delta^{n} \ar@{.>}[r]
        & \varinjlim_{\Delta^{m} \subset L}\ \Delta^{m}
    }
    \end{equation*}
\end{proof}

\begin{remark}
A similar argument shows that the inclusion $NL \subset
\mathbf{\Delta}/L$ is a homotopy equivalence of categories for each finite
simplicial complex $L$. Every simplex $\sigma:\Delta^{n} \to L$ has a
canonical factorization $\sigma = d_{\sigma}s_{\sigma}$, and the
assignment $\sigma \mapsto d_{\sigma}$ defines a functor
$\mathbf{\Delta}/L \to NL$ which is inverse to the inclusion up to
homotopy defined by the morphisms $s_{\sigma}$.
  \end{remark}

\begin{corollary}\label{cor 9}
  Suppose given maps
  \begin{equation*}
    K \xrightarrow{g} L \xrightarrow{f} X
    \end{equation*}
where $K$ and $L$ are finite simplicial complexes. Then the composite
$f \cdot g$ is induced by the composite functor
\begin{equation*}
  NK \xrightarrow{g_{\ast}} NL \xrightarrow{\tilde{f}} \mathbf{\Delta}/X.
\end{equation*}
  \end{corollary}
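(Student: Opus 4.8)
The plan is to chase the claim through the two isomorphisms established in Lemma \ref{lem 6} and the explicit description of the functor $g_\ast$ from Corollary \ref{cor 7}. The statement to prove is essentially a compatibility between the construction $f \mapsto \tilde f$ (realizing a map out of a simplicial complex via its poset of nondegenerate simplices) and composition with a morphism of simplicial complexes $g$. So the first thing I would do is unwind the definition of $\widetilde{f\cdot g}$: by Lemma \ref{lem 6}, $\widetilde{f \cdot g}$ is the composite $NK \to \mathbf{\Delta}/K \xrightarrow{(f\cdot g)_\ast} \mathbf{\Delta}/X$, where the first arrow is the inclusion of $NK$ into the simplex category. I want to show this agrees with $\tilde f \circ g_\ast$.

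The key step is to compare these two functors on objects and morphisms of $NK$. On a nondegenerate simplex $\sigma: \Delta^n \to K$, the left-hand composite sends $\sigma$ to the object $f\cdot g\cdot\sigma: \Delta^n \to X$ of $\mathbf{\Delta}/X$. On the right-hand side, Corollary \ref{cor 7} (whose proof I may invoke) factors $g\cdot\sigma$ as $\Delta^n \xrightarrow{s_\sigma} \Delta^r \xrightarrow{d_\sigma} L$ with $s_\sigma$ a codegeneracy and $d_\sigma = g_\ast(\sigma)$ nondegenerate; then $\tilde f$ sends $g_\ast(\sigma) = d_\sigma$ to $f\cdot d_\sigma: \Delta^r \to X$. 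So the two objects of $\mathbf{\Delta}/X$ are $f\cdot g\cdot \sigma$ and $f \cdot d_\sigma$, and these differ by the codegeneracy $s_\sigma$. The point is that in the simplex category $\mathbf{\Delta}/X$ these are canonically identified up to the morphism induced by $s_\sigma$: precisely, $s_\sigma$ is a morphism $f\cdot g\cdot\sigma \to f\cdot d_\sigma$ in $\mathbf{\Delta}/X$, since $f\cdot d_\sigma \cdot s_\sigma = f\cdot g\cdot \sigma$.

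Thus I expect the cleanest formulation to be that the two functors are not literally equal but agree up to a natural transformation built from the codegeneracies $s_\sigma$ — and since a natural transformation induces a homotopy on nerves, both induce the same map on the colimits appearing in Lemma \ref{lem 6}, which is exactly the map $f\cdot g$. Concretely, I would assemble the commutative diagram
\begin{equation*}
  \xymatrix{
    \varinjlim_{\Delta^n \subset K}\ \Delta^n \ar[r]^{g_\ast} \ar[d]_{\cong}
    & \varinjlim_{\Delta^m \subset L}\ \Delta^m \ar[r]^{\tilde f} \ar[d]_{\cong}
    & \varinjlim_{\Delta^\ell \to X}\ \Delta^\ell \ar[d]^{\cong} \\
    K \ar[r]_{g} & L \ar[r]_{f} & X
  }
\end{equation*}
whose left square is the diagram from Corollary \ref{cor 7} and whose right square is the diagram from Lemma \ref{lem 6}; pasting them horizontally gives that $f\cdot g$ is induced by $\tilde f \cdot g_\ast$, which is the assertion.

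The main obstacle is the bookkeeping in the previous paragraph's object comparison: the functors $\widetilde{f\cdot g}$ and $\tilde f\cdot g_\ast$ land on objects of $\mathbf{\Delta}/X$ whose source simplices have \emph{different} dimensions ($n$ versus $r$), so I cannot claim strict equality of functors, only that they induce the same colimit map. I would handle this by working with the colimit diagram rather than with the functors themselves: the isomorphisms of Lemma \ref{lem 6} collapse exactly the codegeneracy discrepancies, so once the two horizontal composites are checked to agree after passing to colimits — which follows from the commutativity of the codegeneracy squares in the proof of Corollary \ref{cor 7} — the identification of the induced map with $f\cdot g$ is immediate from the right square. No genuinely new input beyond Lemma \ref{lem 6} and Corollary \ref{cor 7} is needed; the content is purely organizational.
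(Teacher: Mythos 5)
Your proof is correct and matches the paper's intent: Corollary \ref{cor 9} is stated there without an explicit proof precisely because it follows by horizontally pasting the colimit diagram from Corollary \ref{cor 7} with the one from Lemma \ref{lem 6}, which is exactly what you do. Your extra remark that $\widetilde{f\cdot g}$ and $\tilde f\cdot g_{\ast}$ agree only after passing to colimits, via the codegeneracy squares $f\cdot d_{\sigma}\cdot s_{\sigma} = f\cdot g\cdot\sigma$, is a correct and worthwhile clarification of the sense of ``induced'' rather than a deviation from the paper's argument.
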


The {\it subdivision} $\sd(\Delta^{n})$ is defined to be the nerve of the
corresponding order complex:
\begin{equation*}
  \sd(\Delta^{n}) = BN\Delta^{n}.
\end{equation*}
More generally, the {\it subdivision} $\sd(X)$ of a simplicial set $X$
is defined by the assignment
\begin{equation*}
  \sd(X) = \varinjlim_{\Delta^{m} \to X}\ \sd(\Delta^{m}).
  \end{equation*}

There is a map
\begin{equation*}
  \pi: \sd(X) \to BNX
\end{equation*}
that is natural in simplicial sets $X$, and is defined by the composites
\begin{equation*}
  \sd(\Delta^{n}) \xrightarrow{\cong} BN\Delta^{n} \xrightarrow{\sigma_{\ast}} BNX.
\end{equation*}
arising from the simplices $\Delta^{n} \to X$ of $X$.

\begin{lemma}\label{lem 10}
The map $\pi: \sd(K) \to BNK$ is an
isomorphism if $K$ is a finite simplicial complex.
\end{lemma}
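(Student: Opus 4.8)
The plan is to exploit Lemma \ref{lem 6}, which exhibits a finite simplicial complex as the colimit of its non-degenerate simplices, together with the fact that $\sd$ preserves colimits. First I would observe that the formula $\sd(X) = \varinjlim_{\Delta^{m} \to X}\sd(\Delta^{m})$ realizes $\sd$ as the left Kan extension along the Yoneda embedding $\mathbf{\Delta} \to s\mathbf{Set}$ of the functor $[m] \mapsto \sd(\Delta^{m}) = BN\Delta^{m}$; such a functor is a left adjoint (to the $\Ex$ functor) and hence cocontinuous. Applying $\sd$ to the isomorphism $\varinjlim_{\Delta^{n} \subset K}\Delta^{n} \xrightarrow{\cong} K$ of Lemma \ref{lem 6}, I obtain a natural isomorphism
\begin{equation*}
  \sd(K) \cong \varinjlim_{\Delta^{n} \subset K}\ \sd(\Delta^{n}) = \varinjlim_{\Delta^{n} \subset K}\ BN\Delta^{n},
\end{equation*}
the colimit now being taken over the poset $NK$, and under this identification the map $\pi$ becomes the canonical comparison map $\varinjlim_{\Delta^{n} \subset K} BN\Delta^{n} \to BNK$ induced by the inclusions $\langle \sigma \rangle \subset K$.

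It therefore suffices to show that this comparison map is an isomorphism, i.e. that the functor $BN(-)$ also carries the colimit of Lemma \ref{lem 6} to $BNK$. At the level of posets this is routine: exactly as in Lemma \ref{lem 6}, the non-degenerate simplices of $K$ are assembled without redundancy from the faces of its simplices, so $NK \cong \varinjlim_{\Delta^{n} \subset K} N\Delta^{n}$ in the category of posets, the identifications being forced by the uniqueness of faces in a simplicial complex.

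The hard part will be that the nerve functor $B$ does not preserve colimits in general, so I cannot simply apply $B$ to the previous isomorphism; instead I would verify directly that $\pi$ is a bijection in each simplicial degree, colimits of simplicial sets being computed degreewise. For surjectivity, an $r$-simplex of $BNK$ is a chain $\rho_{0} \leq \cdots \leq \rho_{r}$ of non-degenerate simplices of $K$; since $K$ is a simplicial complex each $\rho_{i}$ is a face of $\rho_{r}$, so the whole chain lies in the image of $N\langle \rho_{r}\rangle \cong N\Delta^{n}$ and hence lifts into $\sd(K)$. For injectivity, suppose two representatives $(\tau, c)$ and $(\tau', c')$, with $c, c'$ chains in $N\Delta^{n_{\tau}}$ and $N\Delta^{n_{\tau'}}$, map to the same chain of $NK$. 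Their common top vertex is a single non-degenerate simplex $\nu$ that is a face of both $\tau$ and $\tau'$, so $\nu \leq \tau$ and $\nu \leq \tau'$ in $NK$; because $\tau$ and $\tau'$ are monomorphisms the induced maps $N\Delta^{n_{\tau}} \to NK$ and $N\Delta^{n_{\tau'}} \to NK$ are injective, so both $c$ and $c'$ descend to one and the same chain in $N\Delta^{n_{\nu}}$. Thus $(\tau, c)$ and $(\tau', c')$ are already identified over $\nu$ in the colimit, and $\pi$ is injective. This uses in an essential way that $K$ is a simplicial complex, so that every chain in $NK$ has a maximal element into which all its members embed as faces, which is exactly the hypothesis that fails for a general simplicial set. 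Combining the two degreewise statements shows that $\pi$ is an isomorphism.
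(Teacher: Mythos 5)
Your proof is correct and follows essentially the same route as the paper: both arguments use Lemma \ref{lem 6} to write $\sd(K)$ as the colimit over the poset $NK$ of the subdivisions $\sd\langle\sigma\rangle = BN\langle\sigma\rangle$ of the subcomplexes generated by non-degenerate simplices, on which $\pi$ is an isomorphism by definition. The only difference is presentational: where the paper disposes of the remaining comparison $\varinjlim_{\sigma \in NK} BN\langle\sigma\rangle \to BNK$ with a one-line appeal to the fact that the intersection of two non-degenerate simplices of $K$ is again a non-degenerate simplex, you unpack that same fact into an explicit degreewise bijection (surjectivity via the top element of a chain, injectivity via descent of both representatives to the common face $\nu$), which is a legitimate expansion of the identical argument.
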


\begin{proof}
  There is a commutative diagram
  \begin{equation*}
    \xymatrix{
      \varinjlim_{ \sigma \in NK}\ \sd \langle \sigma \rangle
      \ar[r]^-{\cong} \ar[d]_{\pi_{\ast}}
      & \sd(K) \ar[d]^{\pi} \\
      \varinjlim_{\sigma\in NK}\ BN\langle \sigma \rangle \ar[r]_-{\cong} & BNK
    }
    \end{equation*}
where $\langle \sigma \rangle$ is the subcomplex of $X$ that is
generated by the simplex $\sigma$. The top horizontal map is an
isomorphism, since $K$ is a colimit of the subcomplexes $\langle
\sigma \rangle$ which are generated by non-degenerate simplices
$\sigma$ by Lemma \ref{lem 6}. The bottom horizontal map is an
isomorphism, again since the intersection of two non-degenerate simplices
of the simplicial complex $K$ is a non-degenerate simplex.

All maps $\pi: \sd \langle \sigma \rangle \to BN\langle \sigma
\rangle$ are isomorphisms, again since $K$ is a simplicial
complex: if $\sigma$ is a non-degenerate $n$-simplex
of $K$, then the canonical map $\Delta^{n} \to \langle \sigma \rangle$
that is defined by $\sigma$ is an isomorphism.
\end{proof}

Suppose again that $K \subset \Delta^{N}$ is a finite simplicial
complex. The induced functor $NK \to N\Delta^{N}$ is a fully faithful
imbedding, which induces a monomorphism $\sd(K) \to \sd(\Delta^{N})$
of associated nerves. There is a total ordering on the non-degenerate
simplices of $\Delta^{N}$ which extends the total ordering on its
vertices, such that every $k$-simplex is less than every
$(k+1)$-simplex for $0 \leq k \leq N-1$. In this way, there is a fully
faithful imbedding $N\Delta^{N} \subset \mathbf{M}$ for some ordinal
number $M$, and so there is a monomorphism $\sd(\Delta^{N}) \subset
\Delta^{M}$. It follows from the resulting string of inclusions
\begin{equation*}
  \sd(K) \subset \sd(\Delta^{N}) \subset \Delta^{M}
\end{equation*}
that $\sd(K)$ is a finite simplicial complex.

The {\it last vertex} functor $N\Delta^{n} \to \mathbf{n}$ is defined
by sending a non-degenerate simplex $\sigma: \mathbf{k} \to
\mathbf{n}$ to $\sigma(k)$ --- see \cite{GJ},
\cite{J34}. This functor induces a simplicial set map $\gamma:
\sd\Delta^{n} \to \Delta^{n}$. The maps $\gamma$ are natural in
simplices $\Delta^{n}$, and together induce a map
\begin{equation*}
 \gamma: \sd(X) \to X
\end{equation*}
which is natural in simplicial sets $X$. Composition of instances of this map defines the various natural maps
\begin{equation*}
  \sd^{k}(X) \xrightarrow{\gamma} \sd^{k-1}(X) \xrightarrow{\gamma} \dots \xrightarrow{\gamma} \sd(X) \xrightarrow{\gamma} X,
\end{equation*}
all of which will be denoted by $\gamma$, and called {\it subdivision maps}.

It is a consequence of the results of this section (specifically,
Corollary \ref{cor 9}) that if $K$ is a finite simplicial complex and
$X$ is a simplicial set, then the string of simplicial set maps
\begin{equation*}
  \sd^{k}(K) \xrightarrow{\gamma} \sd^{k-1}(K) \xrightarrow{\gamma} \dots \xrightarrow{\gamma} \sd(K) \xrightarrow{\gamma} K \xrightarrow{f} X
\end{equation*}
is induced by the string of functors
\begin{equation*}
  N\sd^{k}(K) \xrightarrow{\gamma_{\ast}} N\sd^{k-1}(K) \xrightarrow{\gamma_{\ast}} \dots \xrightarrow{\gamma_{\ast}} N\sd(K) \xrightarrow{\gamma_{\ast}} NK \xrightarrow{\tilde{f}} \mathbf{\Delta}/X.
\end{equation*}

Suppose that $f: X \to Y$ is a map of Kan complexes, and replace $f$
by a fibration in the usual way, by forming the pullback diagram
\begin{equation*}
\xymatrix{
X \times_{Y} Y^{I} \ar[r]^{f_{\ast}} \ar[d] & Y^{I} \ar[r]^{d_{1}} \ar[d]^{d_{0}} & Y \\
X \ar[r]_{f} & Y
}
\end{equation*}
Let $\pi$ be the composite $d_{1} \cdot f_{\ast}$. Then $\pi$ is a
fibration which is weakly equivalent to $f$.

Here, $Y^{I}$ is the function complex $\mathbf{hom}(\Delta^{1},Y)$,
and the maps $d_{0},d_{1}: Y^{I} \to Y$ are defined by precomposition
with the maps $d^{0},d^{1}: \Delta^{0} \to \Delta^{1}$, respectively.

A solution of the lifting problem
\begin{equation*}
\xymatrix{
\partial\Delta^{n} \ar[r]^{(\alpha,h_{\ast})} \ar[d] & X \times_{Y} Y^{I} \ar[d]^{\pi} \\
\Delta^{n} \ar[r]_{\beta} \ar@{.>}[ur] & Y
}
\end{equation*}
is equivalent an extension of the adjoint diagram
\begin{equation}\label{eq 3}
\xymatrix{
\partial\Delta^{n} \ar[r]^{\alpha} \ar[d]_{d_{0}} & X \ar[d]^{f} \\
(\partial\Delta^{n} \times \Delta^{1}) \cup (\Delta^{n} \times \{ 0 \}) \ar[r]_-{(h,\beta)} & Y
}
\end{equation}
to a diagram
\begin{equation}\label{eq 4}
  \xymatrix{
    \Delta^{n} \ar[r]^{\theta} \ar[d]_{d_{0}} & X \ar[d]^{f} \\
    \Delta^{n} \times \Delta^{1} \ar[r]_-{H} & Y
  }
\end{equation}

It follows that a simplicial set map $f: X \to Y$ between Kan
complexes is a weak equivalence if and only if every diagram
(\ref{eq 3}) extends to a diagram (\ref{eq 4}).

If $X$ and $Y$ are not Kan complexes, then $f: X \to Y$ is a weak
equivalence if and only if all diagrams (\ref{eq 3}) extend to
diagrams (\ref{eq 4}) after subdivision. In other words, given a
diagram (\ref{eq 3}), there is some $k \geq 0$ such that the composite
diagram
\begin{equation*}
\xymatrix{
\sd^{k}(\partial\Delta^{n}) \ar[r]^{\gamma} \ar[d]_{d_{0\ast}} & \partial\Delta^{n} \ar[r]^{\alpha}  & X \ar[d]^{f} \\
\sd^{k}((\partial\Delta^{n} \times \Delta^{1}) \cup (\Delta^{n} \times \{ 0 \})) \ar[r]_-{\gamma}
& (\partial\Delta^{n} \times \Delta^{1}) \cup (\Delta^{n} \times \{ 0 \}) \ar[r]_-{(h,\beta)} & Y
}
\end{equation*}
extends to a diagram
\begin{equation*}
  \xymatrix{
\sd^{k}(\Delta^{n}) \ar[r]^{\gamma} \ar[d]_{d_{0\ast}} & \Delta^{n} \ar[r]^{\theta} & X \ar[d]^{f} \\
\sd^{k}(\Delta^{n} \times \Delta^{1}) \ar[r]_-{\gamma} &    \Delta^{n} \times \Delta^{1} \ar[r]_-{H} & Y
  }
\end{equation*}
This follows from the fact that the map $f: X \to Y$ is a weak
equivalence if and only if the induced map $\Ex^{\infty}X \to
\Ex^{\infty}Y$ is a weak equivalence of Kan complexes --- see section
III.4 of \cite{GJ}.

\section{General diagram categories}

Suppose that $I$ is a fixed (but arbitrary) small index category.

The definitions of Section 1 persist for the category $s\Pre^{I}$ of
$I$-diagrams of simplicial presheaves with their natural
transformations. A {\it cofibration} is a sectionwise monomorphism. A
{\it pro-equivalence} $X \to Y$ is a natural transformation which
induces a weak equivalence of simplicial sets
\begin{equation*}
 f^{\ast}:  \hocolim_{i \in I}\ \mathbf{hom}(Y_{i},Z) \to \hocolim_{i \in I}\ \mathbf{hom}(X_{i},Z)
\end{equation*}
for all injective fibrant objects $Z$..Finally, a {\it pro-fibration} is a
map which has the right lifting property with respect to maps which
are cofibrations and pro-equivalences.

We prove, in this section, an analogue of Proposition \ref{prop 5},
which asserts that these definitions give the category $s\Pre^{I}$ the
structure of a left proper closed simplicial model category. The main
result is Theorem \ref{th xxx} below.
\medskip

Suppose that $X$ is an $I$-diagram of simplicial presheaves
and that the simplicial presheaf $Z$ is injective fibrant. Suppose
that $K$ is a finite simplicial complex.

A simplicial set map $f: K \to \hocolim_{I^{op}}\ \mathbf{hom}(X,Z)$ is induced by
a functor
\begin{equation*}
\tilde{f}: NK \to \mathbf{\Delta}/\hocolim_{I^{op}}\ \mathbf{hom}(X,Z),
\end{equation*}
according to Lemma \ref{lem 6}.
The homotopy colimit can be identified with the diagonal of the nerve
of a simplicial category $H_{I}(X,Z)$ with morphisms in simplicial
degree $n$ having the form
\begin{equation*}
  \xymatrix@C=10pt{
    X(i) \times \Delta^{n} \ar[rr]^{\alpha_{\ast} \times 1} \ar[dr] && X(j) \times \Delta^{n} \ar[dl] \\
    & Z
  }
\end{equation*}
where $\alpha: i \to j$ is a morphism of $I$.
Observe that there is a
forgetful functor
\begin{equation*}
\pi: H_{I}(X,Z) \to I. 
\end{equation*}

An $n$-simplex $D$ of
the homotopy colimit consists of a functor $\alpha:
\mathbf{n} \to I$ and a diagram
\begin{equation*}
  \xymatrix@C=8pt{
    X(\alpha(0)) \times \Delta^{n} \ar[drr] \ar[r]
    & X(\alpha(1)) \times \Delta^{n} \ar[dr] \ar[r] 
    & \dots \ar[r] & X(\alpha(n-1)) \times \Delta^{n} \ar[dl] \ar[r]
    & X(\alpha(n)) \times \Delta^{n} \ar[dll]^{\tau} \\
    & & Z
  }
\end{equation*}
or alternatively an $n$-simplex in the nerve of the slice category $X
\times \Delta^{n}/Z$. Note (this is standard) that the simplex is
completely determined by the functor $\alpha$ and the map $\tau$.

Given such an object, if $\theta: \mathbf{m} \to \mathbf{n}$ is an
ordinal number map then the simplex $\theta^{\ast}D$ is defined by the
composite functor $\mathbf{m} \xrightarrow{\theta} \mathbf{n}
\xrightarrow{\alpha} I$ and the composite diagram
\begin{equation*}
  \xymatrix@C=6pt{
    X(\alpha(\theta(0))) \times \Delta^{m} \ar[d]_{1 \times \theta} \ar[r]
    & X(\alpha(\theta(1))) \times \Delta^{m} \ar[d]^{1 \times \theta} \ar[r] 
    & \dots \ar[r]
    & X(\alpha(\theta(m-1))) \times \Delta^{m} \ar[d]^{1 \times \theta} \ar[r]
    & X(\alpha(\theta(m))) \times \Delta^{m} \ar[d]^{1 \times \theta} \\
    X(\alpha(\theta(0))) \times \Delta^{n} \ar[drr] \ar[r]
    & X(\alpha(\theta(1))) \times \Delta^{n} \ar[dr] \ar[r] 
    & \dots \ar[r] & X(\alpha(\theta(m-1))) \times \Delta^{n} \ar[dl] \ar[r]
    & X(\alpha(\theta(m))) \times \Delta^{n} \ar[dll] \\
    & & Z
  }
\end{equation*}

Alternatively, the $n$-simplex above is a functor
$\alpha: \mathbf{n} \to I$ and a natural transformation $f: (X\cdot
\alpha) \times \Delta^{n} \to Z$. Then $\theta^{\ast}(\alpha,f)$ is
the pair consisting of the composite functor $\alpha \cdot \theta$
  together with the composite natural transformation
\begin{equation*}
  (X\cdot\alpha\cdot\theta) \times \Delta^{m} \xrightarrow{1 \times \theta}
  (X\cdot\alpha\cdot\theta) \times \Delta^{n} \xrightarrow{f\cdot \theta} Z.
\end{equation*}

Write
\begin{equation*}
E_{I}(X,Z) =
\mathbf{\Delta}/\hocolim_{I^{op}}\ \mathbf{hom}(X,Z)
\end{equation*}
to make the
notation easier to deal with.
\medskip

Suppose that $i: X \to Y$ is a monomorphism of $I$-diagrams. Let $j: K
\subset L$ be an inclusion of finite simplicial complexes, and
consider a diagram
\begin{equation*}
  \xymatrix{
    K \ar[r] \ar[d] & \hocolim_{I^{op}}\ \mathbf{hom}(Y,Z) \ar[d] \\
    L \ar[r] & \hocolim_{I^{op}}\ \mathbf{hom}(X,Z)
  }
\end{equation*}
Converting to functors by using the methods of the last section
(Corollary \ref{cor 9}) gives the diagram of functors
\begin{equation*}
  \xymatrix{
    NK \ar[r]^-{\omega} \ar[d]_{j} & E_{I}(Y,Z) \ar[d]^{i^{\ast}} \\
    NL \ar[r]_-{\beta} & E_{I}(X,Z)
  }
\end{equation*}
The functor $i^{\ast}$ is defined by restriction to $X$.
The diagram consists of a functor $\omega$ whose
restriction to $X$ extends to a functor $\beta$ that is defined on
$NL$.

There is a functor 
\begin{equation*}
v_{Y}: E_{I}(Y,Z) \to s\mathbf{Pre},
\end{equation*}
which takes any $n$-simplex $f: Y\cdot \alpha \times \Delta^{n} \to Z$
to the simplicial set $Y(\alpha(n)) \times \Delta^{n}$ (which is the
colimit of $Y \cdot \alpha \times \Delta^{n}$).  This functor $v_{Y}$
is independent of $Z$: if $Z \to W$ is a simplicial set map, then the
diagram
\begin{equation*}
\xymatrix@C=10pt{
E_{I}(Y,Z) \ar[rr] \ar[dr]_{v_{Y}} && E_{I}(Y,W) \ar[dl]^{v_{Y}} \\
& s\mathbf{Pre}
}
\end{equation*}
commutes. 

The functor $\omega: NK \to E_{I}(Y,Z)$ can be identified
with a natural transformation
\begin{equation*}
v_{Y}\cdot \omega \to Z.
\end{equation*}
Taking the colimit $L_{K}(Y)$ of the
composite functor
\begin{equation*}
  NK \xrightarrow{\omega} E_{I}(Y,Z) \xrightarrow{v_{Y}} s\mathbf{Pre}
\end{equation*}
therefore defines a functor $\omega_{\ast}: NK \to E_{I}(Y,L_{K}(Y))$
and a simplicial set map $f_{\omega}: L_{K}(Y) \to Z$ such that
the diagram of functors
\begin{equation*}
  \xymatrix{
    NK \ar[r]^-{\omega_{\ast}} \ar[dr]_{\omega}
    & E_{I}(Y,L_{K}(Y)) \ar[d]^{f_{\omega\ast}} \\
    & E_{I}(Y,Z)
  }
\end{equation*}
commutes. 

Write $j: L_{K}(Y) \to \mathcal{L}(L_{K}(Y))$ for the natural
injective fibrant model of the simplicial presheaf $L_{K}(Y)$. The map
$f_{\gamma}: L_{K}(Y) \to Z$ factors through a map
$\mathcal{L}(L_{K}(Y)) \to Z$.
\medskip

Suppose given a commutative diagram of inclusions
\begin{equation}\label{eq 5}
\xymatrix{
K \ar[r] \ar[d] & K' \ar[d] \\
L \ar[r] & L'
}
\end{equation}
of finite complexes, and suppose that $i: X \to Y$ is a cofibration of
$I$-diagrams such that all diagrams
\begin{equation}\label{eq 6}
\xymatrix{
NK \ar[r]^-{\omega} \ar[d] & E_{I}(Y,Z) \ar[d] \\
NL \ar[r]_-{\beta} & E_{I}(X,Z)
}
\end{equation}
extend to diagrams
\begin{equation}\label{eq 7}
\xymatrix{
NK' \ar[r] \ar[d] & E_{I}(Y,Z) \ar[d] \\
NL' \ar[r] & E_{I}(X,Z)
}
\end{equation}
after subdivision, for all injective fibrant $Z$.

This means that, for each diagram (\ref{eq 6}) there is a $k \geq 0$
such that the diagram
\begin{equation*}
\xymatrix{
N\sd^{k}(K) \ar[r]^-{\gamma_{\ast}} \ar[d]  & NK \ar[r]^-{\omega} & E_{I}(Y,Z) \ar[d] \\
N\sd^{k}(L) \ar[r]_-{\gamma_{\ast}} & NL \ar[r]_-{\beta} & E_{I}(X,Z)
}
\end{equation*}
extends to a diagram
\begin{equation*}
\xymatrix{
N\sd^{k}(K') \ar[r] \ar[d] & E_{I}(Y,Z) \ar[d] \\
N\sd^{k}(L') \ar[r] & E_{I}(X,Z)
}
\end{equation*}

A commutative diagram (\ref{eq 6}) is, equivalently, a diagram of
simplicial presheaf maps
\begin{equation}\label{eq 8}
\xymatrix{
L_{K}X \ar[r] \ar[d] & L_{L}X \ar[d] \\
L_{K}Y \ar[r] & Z
}
\end{equation}
and to say that diagram (\ref{eq 6}) extends to a diagram (\ref{eq 7}) amounts to the assertion that there is a diagram
\begin{equation*}
\xymatrix{
L_{K'}X \ar[r] \ar[d] & L_{L'}X \ar[d] \\
L_{K'}Y \ar[r] & Z
}
\end{equation*}
which restricts to the given diagram (\ref{eq 8}) along the maps
\begin{equation*}
\xymatrix{
L_{K}Y \ar[d] & L_{K}X \ar[l] \ar[r] \ar[d] & L_{L}X \ar[d] \\
L_{K'}Y & L_{K'}X \ar[l] \ar[r] & L_{L'}X
}
\end{equation*}

In other words, we require the existence of a lifting in the diagram
\begin{equation}\label{eq 9}
\xymatrix{
L_{K}Y \cup_{L_{K}X} L_{L}X \ar[r] \ar[d] & Z \\
L_{K'}Y \cup_{L_{K'}X} L_{L'}X \ar@{.>}[ur]
}
\end{equation}
for all injective fibrant $Z$.

The requirement that (\ref{eq 6}) extends to (\ref{eq 7}) up to
subdivision amounts to the existence of a number $k \geq 0$ such that
the lift exists in the diagram
\begin{equation*}
\xymatrix{
  L_{\sd^{k}(K)}Y \cup_{L_{\sd^{k}K}(X)} L_{\sd^{k}(L)}X \ar[r] \ar[d]
  & L_{K}Y \cup_{L_{K}X} L_{L}X \ar[r] & Z \\
L_{\sd^{k}(K')}Y \cup_{L_{\sd^{k}(K')}X} L_{\sd^{k}(L')}X \ar@{.>}@<-1ex>[urr]
}
\end{equation*}

It is enough to solve the extension problem in the case where $Z$ is
an injective fibrant model of the pushout $L_{K}Y \cup_{L_{K}X}
L_{L}X$, since all extension problems (\ref{eq 9}) are solved by the
existence of extensions for this fibrant model.

We return to the cases of (\ref{eq 3}) and (\ref{eq 4}), which concern
the cases of (\ref{eq 5}) given by the countable list of diagrams
\begin{equation*}
  \xymatrix{
    K=\partial\Delta^{n} \ar[r] \ar[d] & \Delta^{n} \ar[d] \\
    L=(\partial\Delta^{n} \times \Delta^{1}) \cup (\Delta^{n} \times \{ 0 \})
    \ar[r] & \Delta^{n} \times \Delta^{1}
  }
\end{equation*}
where $n \geq 0$.  We also suppose that the regular cardinals $\lambda
> \alpha$ are chosen as above.

Then for a fixed diagram (\ref{eq 5}), or fixed $n \geq 0$, the list
of associated extension problems (\ref{eq 9}) is determined by the
size of the set of functors $NL \to I$.  The category $I$ is
$\lambda$-bounded, while there are countably many finite complexes $L
= \Delta^{n} \times \Delta^{1}$ of interest, and so the entire list of
relevant extension problems is $\lambda$-bounded.

\begin{lemma}\label{lem 11}
  Suppose that the regular cardinals $\lambda > \alpha$ are chosen as
  above.  Suppose that the monomorphism $i: X \to Y$ is a
  pro-equivalence, and that $A$ is a $\lambda$-bounded subobject of
  $Y$. Then there is a $\lambda$-bounded subobject $B \subset Y$ with
  $A \subset B$ such that the map $B \cap X \to B$ is a
  pro-equivalence.
  \end{lemma}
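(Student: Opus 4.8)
\emph{Proof proposal.} The plan is to run the bounded-monomorphism scheme of Lemma~\ref{lem 1} again, with ``refinement along $i$'' replaced throughout by ``subdivision'' and the filtered-colimit bookkeeping replaced by the apparatus built up above. First I would record the target to be hit: by the subdivision criterion for weak equivalences recalled in Section~2, the map $B \cap X \to B$ is a pro-equivalence exactly when, for every injective fibrant $Z$, each diagram (\ref{eq 6}) built from $B\cap X \to B$ extends to (\ref{eq 7}) after some subdivision; and, as shown before the statement, this is the same as the solvability after subdivision of each lifting problem (\ref{eq 9}) with $Y, X$ replaced by $B, B\cap X$. It further suffices to take $Z$ to be the injective fibrant model $Z(B) := \mathcal{L}\bigl(L_{K}(B)\cup_{L_{K}(B\cap X)}L_{L}(B\cap X)\bigr)$ of the relevant pushout.

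The decisive bookkeeping point, already isolated above, is that the entire list of these extension problems is $\lambda$-bounded. The data of a problem (\ref{eq 9}), at every subdivision level, is a functor $N\sd^{k}(L) \to I$; since $I$ is $\lambda$-bounded, each of the countably many finite complexes $\sd^{k}(\Delta^{n}\times\Delta^{1})$ contributes fewer than $\lambda$ such functors, and $\lambda$ is regular and uncountable, so the whole family has size $<\lambda$. I would also note that $L_{K}(-)$ is a finite colimit of the assignments $B \mapsto B(i)\times\Delta^{m}$ and hence commutes with the $\lambda$-filtered colimit $Y=\varinjlim_{B}B$ over $\lambda$-bounded subobjects; combined with the preservation properties of $\mathcal{L}$ (it preserves $\lambda$-boundedness, monomorphisms and intersections, and commutes with $\lambda$-filtered colimits), this exhibits $Z(Y)=\varinjlim_{B}Z(B)$ as a $\lambda$-filtered colimit of $\lambda$-bounded objects.

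The localization step is then the precise analogue of the passage ``the solution lies in $Z(B)_{i}$'' in the proof of Lemma~\ref{lem 1}. Because $i: X \to Y$ is a pro-equivalence, every extension problem with target $Z(Y)$ is solved after some subdivision $\sd^{k}$. If the data of such a problem lies in a $\lambda$-bounded subobject $A'\subset Y$, then its solution is a map out of the $\lambda$-bounded object $L_{\sd^{k}(K')}(Y)\cup\cdots$ into the $\lambda$-filtered colimit $Z(Y)=\varinjlim_{B}Z(B)$, and so factors through $Z(B')$ for some $\lambda$-bounded $B'\subset Y$ with $A'\subset B'$.

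I would finish with the standard transfinite construction. Set $B_{0}=A$; given $B_{n}$, let $B_{n+1}\supset B_{n}$ be a $\lambda$-bounded subobject absorbing a localizing stage $B'$ as above for each of the fewer-than-$\lambda$ problems whose data lies in $B_{n}$, and put $B=\bigcup_{n}B_{n}$. Regularity of $\lambda$ makes $B$ $\lambda$-bounded, and since every problem (\ref{eq 9}) for $B$ carries the finite data of a single functor $N\sd^{k}(L)\to I$, it is a problem for some $B_{n}$; its solution into $Z(Y)$ therefore factors through $Z(B_{n+1})\subset Z(B)$ after subdivision, so every extension problem for $B\cap X\to B$ is solvable after subdivision and $B\cap X\to B$ is a pro-equivalence. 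The one genuinely new difficulty, beyond the filtered case, is the interaction of subdivision with this $\lambda$-bounded colimit structure: refinement in Lemma~\ref{lem 1} stays inside the fixed category $I$ and the colimits are honestly filtered, whereas here the solving complex $\sd^{k}(L')$ grows without bound in $k$, and one must know both that the enlarged family of subdivided problems stays $\lambda$-bounded and that it is enough to solve each problem at a \emph{finite} subdivision level. This is exactly where the stalklike behaviour of $\Ex^{\infty}$ --- equivalently, the characterization of weak equivalences by extension of (\ref{eq 3}) to (\ref{eq 4}) after finitely many subdivisions --- carries the argument, keeping each individual problem together with a solution $\lambda$-bounded.
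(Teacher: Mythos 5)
Your overall architecture matches the paper's: reduce to the universal fibrant target $Z(B)=\mathcal{L}\bigl(L_{K}B \cup_{L_{K}(B\cap X)} L_{L}(B\cap X)\bigr)$, note that the list of relevant extension problems is $\lambda$-bounded, use the decomposition $Z(Y)=\varinjlim_{C}Z(C)$ over $\lambda$-bounded subobjects to factor restricted solutions of the global problems for $i:X\to Y$ through bounded stages, and close off by iteration. The gap is in the closure step. You iterate only countably often ($B=\bigcup_{n}B_{n}$) and justify the final verification with the claim that every problem (\ref{eq 9}) for $B$ ``carries the finite data of a single functor $N\sd^{k}(L)\to I$'' and hence ``is a problem for some $B_{n}$.'' That is false: the \emph{index} of a problem (the functor to $I$, which is independent of the subobject) lives at every stage, but the lifting problem itself has domain $L_{K'}B\cup_{L_{K'}(B\cap X)}L_{L'}(B\cap X)$, which is built from all of $B$ and is contained in no finite stage. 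What is true is that this domain is the colimit over the chain of the corresponding objects for the stages, and that the restriction of the single $Y$-solution (at its fixed subdivision level $k$) to the stage-$t$ part lands in $Z$ of the next stage; a solution for $B$ must then be \emph{assembled} across the whole chain, and this comparison of $\varinjlim_{t}\mathcal{L}(\cdots)$ with $\mathcal{L}(\cdots(B))$ is exactly the step your text omits.

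Your choice of an $\omega$-chain also clashes with your own toolkit at this point: you invoke that $\mathcal{L}$ commutes with $\lambda$-filtered colimits, but a countable chain is not $\lambda$-filtered, and $\{B_{n}\}$ is not cofinal among the $\lambda$-bounded subobjects of $B$ (take sections $x_{n}\in B_{n}\setminus B_{n-1}$; the subobject they generate is $\lambda$-bounded but lies in no $B_{n}$), so the stated properties of $L$ do not give $\varinjlim_{n}\mathcal{L}(L_{K}B_{n}\cup\cdots)\cong\mathcal{L}(L_{K}B\cup\cdots)$. The paper resolves both issues simultaneously by iterating $\lambda$ times: regularity of $\lambda$ makes the chain $\{A_{t}\}_{t<\lambda}$ cofinal among $\lambda$-bounded subobjects of $B=\varinjlim_{t<\lambda}A_{t}$, whence
\begin{equation*}
\varinjlim_{t<\lambda}\ \mathcal{L}\bigl(L_{K}A_{t}\cup_{L_{K}(A_{t}\cap X)}L_{L}(A_{t}\cap X)\bigr)\ \cong\ \mathcal{L}\bigl(L_{K}B\cup_{L_{K}(B\cap X)}L_{L}(B\cap X)\bigr),
\end{equation*}
and the restricted solutions glue to the required lift at the subdivision level inherited from the $Y$-solution. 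A countable iteration could in principle be repaired without this cofinality, using that $\mathcal{L}$ preserves monomorphisms so that the assembled map lands in $\bigcup_{n}Z(B_{n+1})\subset Z(B)$, but that argument has to be made explicitly; as written, your proof is missing it. (Your subdivision bookkeeping --- a fixed finite level per problem via the $\Ex^{\infty}$ criterion, and $\lambda$-boundedness of the subdivided family --- is consistent with the paper and is not where the difficulty lies.)
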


\begin{proof}
Consider the commutative diagram of inclusions of $I$-diagrams
\begin{equation*}
\xymatrix{
A \cap X \ar[r] \ar[d] & X \ar[d]^{i} \\
A \ar[r] & Y
}
\end{equation*}
where $A$ is a $\lambda$-bounded subobject of $Y$. Suppose that the
lifting problem (\ref{eq 9}) can be solved up to subdivision for $i: X
\to Y$, and that $Z$ is the fibrant model $\mathcal{L}(L_{K}Y
\cup_{L_{K}X} L_{L}X)$. Consider the diagram
\begin{equation*}
\xymatrix{
L_{K}A \cup_{L_{K}(A \cap X)} L_{L}(A \cap X) \ar[r] \ar[d]
& L_{K}Y \cup_{L_{K}X} L_{L}X \ar[r] \ar[d] & Z \\
L_{K'}A \cup_{L_{K'}(A \cap X)} L_{L'}(A \cap X) \ar[r]_-{j}
& L_{K'}Y \cup_{L_{K'}X} L_{L'}X \ar@{.>}[ur]_{\theta} \\
}
\end{equation*}

The image of the composite $\theta \cdot j$ is $\lambda$-bounded, and
$Z=\mathcal{L}(L_{K}Y \cup_{L_{K}X} L_{L}X)$ is a filtered colimit of
the subobjects $\mathcal{L}(L_{K}C \cup_{L_{K}(C \cap X)} L_{L}(C \cap
X))$, as $C$ varies through the $\lambda$-bounded subobjects of $Y$,
so the image of $\theta \cdot j$
factors through $\mathcal{L}(L_{K}A_{1} \cup_{L_{K}(A_{1} \cap X)}
L_{L}(A_{1} \cap X))$ for some $\lambda$-bounded $A_{1} \subset Y$ with
$A \subset A_{1}$. This can be done simultaneously for the full
$\lambda$-bounded list of extension problems.

Repeat this construction $\lambda$ times, and let $B = \varinjlim_{t <
  \lambda}\ A_{t}$. Then the map
\begin{equation*}
\varinjlim_{t < \lambda}\ \mathcal{L}(L_{K}A_{t} \cup_{L_{K}(A_{t} \cap X)}
L_{L}(A_{t} \cap X)) \to
\mathcal{L}(L_{K}B \cup_{L_{K}(B \cap X)}
L_{L}(B \cap X))
\end{equation*}
is an isomorphism, and there is a commutative diagram
\begin{equation*}
\xymatrix{
L_{K}B \cup_{L_{K}(B \cap X)} L_{L}(B \cap X) \ar[r]^-{j} \ar[d]
& \mathcal{L}(L_{K}B \cup_{L_{K}(B \cap X)} L_{L}(B \cap X)) \\
L_{K'}B \cup_{L_{K'}(B \cap X)} L_{L'}(B \cap X) \ar[ur]
}
\end{equation*}
where $j$ is the fibrant model map. This holds for all relevant
extension problems, so that the map $B \cap X \to B$ is a
pro-equivalence.
\end{proof}

Lemma \ref{lem 11} is the generalization of Lemma \ref{lem 1}, to the
case of arbitrary small index categories $I$. Proposition \ref{prop 5}
follows from Lemma \ref{lem 1}, via a sequence of formal steps given
by Lemma \ref{lem 2}, Corollary \ref{cor 3} and Lemma \ref{lem 4}.
These same results apply to the present case of arbitrary
$I$-diagrams, starting from Lemma \ref{lem 11}, giving the
following result:

\begin{theorem}\label{th xxx}
Suppose that $I$ is a small category $I$, and that $\mathcal{C}$
is a Grothendieck site. The category $s\Pre^{I}$ of $I$-diagrams in
simplicial presheaves on $\mathcal{C}$, together with the classes of
cofibrations, pro-equivalences and pro-fibrations, satisfies the
axioms for a left proper closed simplicial model category. This model
structure is cofibrantly generated.
\end{theorem}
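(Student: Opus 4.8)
The plan is to run the formal scaffolding already used for Proposition \ref{prop 5} essentially verbatim, substituting the general bounded monomorphism statement of Lemma \ref{lem 11} for Lemma \ref{lem 1} at every point where the latter was invoked. First I would observe that the proofs of Lemma \ref{lem 2}, Corollary \ref{cor 3} and Lemma \ref{lem 4} use only three inputs: the existence, for an arbitrary small index category, of a factorization of any map of $I$-diagrams into a cofibration followed by a trivial injective fibration (provided by the injective model structure on $s\Pre^{I}$, whose cofibrations are the sectionwise monomorphisms); the definition of a pro-fibration as a map with the right lifting property against pro-equivalence cofibrations; and the bounded monomorphism statement. Since Lemma \ref{lem 11} supplies the third input for arbitrary $I$, these three proofs transcribe without change, yielding the general analogues: a map is a pro-fibration and a pro-equivalence if and only if it is a trivial injective fibration; a pro-fibration is detected by the right lifting property against the $\lambda$-bounded pro-equivalence cofibrations; and the pro-equivalence cofibrations are closed under pushout.

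With these in hand I would verify the closed model axioms. The axioms \textbf{CM1}, \textbf{CM2} and \textbf{CM3} are automatic: $s\Pre^{I}$ is complete and cocomplete, and pro-equivalences satisfy two-out-of-three and are closed under retracts because they are defined by applying $\hocolim_{I^{op}}\ \mathbf{hom}(-,Z)$ and asking for a weak equivalence of simplicial sets, functorially in the injective fibrant $Z$. For \textbf{CM4}, a pro-equivalence cofibration lifts against pro-fibrations by definition, while a cofibration lifts against a trivial pro-fibration because the latter is a trivial injective fibration, by the general analogue of Corollary \ref{cor 3}, and hence lifts against every cofibration.

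The factorization axiom \textbf{CM5} requires the two factorizations. For the cofibration / trivial pro-fibration factorization I would again use the analogue of Corollary \ref{cor 3} to identify trivial pro-fibrations with trivial injective fibrations, and then take the cofibration followed by trivial injective fibration factorization of the injective model structure: the right factor is a pro-fibration since it lifts against all cofibrations, and a pro-equivalence because a sectionwise trivial fibration is a sectionwise weak equivalence, and sectionwise weak equivalences are pro-equivalences. For the pro-equivalence cofibration / pro-fibration factorization I would run the small object argument with generating set the (isomorphism classes of) $\lambda$-bounded pro-equivalence cofibrations: the right factor then has the right lifting property against this set and is a pro-fibration by the analogue of Lemma \ref{lem 4}, while the left factor is a transfinite composite of pushouts of pro-equivalence cofibrations, hence a cofibration, and a pro-equivalence by closure under pushout together with closure under the relevant filtered colimits. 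The simplicial axiom is checked exactly as in Proposition \ref{prop 5}: for cofibrations $A \to B$ of diagrams and $K \to L$ of simplicial sets the map $(A \times L) \cup (B \times K) \to B \times L$ is a cofibration that is a pro-equivalence as soon as one of the factors is. Left properness is verified as in the filtered case.

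Cofibrant generation then falls out of the argument: Lemma \ref{lem 11} is precisely what makes the $\lambda$-bounded pro-equivalence cofibrations a generating set for the trivial cofibrations, while the $\lambda$-bounded cofibrations generate all cofibrations. The genuinely hard input is Lemma \ref{lem 11} itself, which is already established; within the present proof the only point that needs care is the claim, used in the small object argument, that transfinite composites of pro-equivalence cofibrations remain pro-equivalences. This rests on the compatibility of the fibrant model functor $L$ and of $\hocolim_{I^{op}}\ \mathbf{hom}(-,Z)$ with filtered colimits, and it is exactly here that the boundedness furnished by Lemma \ref{lem 11} is indispensable.
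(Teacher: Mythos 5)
Your proposal is correct and follows exactly the paper's route: the paper proves Theorem \ref{th xxx} simply by observing that Lemma \ref{lem 11} replaces Lemma \ref{lem 1}, after which the formal steps of Section 1 (Lemma \ref{lem 2}, Corollary \ref{cor 3}, Lemma \ref{lem 4} and the proof of Proposition \ref{prop 5}) transcribe verbatim to arbitrary small $I$. Your write-up just makes explicit the details (the injective factorizations, the small object argument with $\lambda$-bounded pro-equivalence cofibrations, and the saturation of trivial cofibrations) that the paper leaves implicit.
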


\bibliographystyle{plain} 
\bibliography{spt}

\end{document}